\newtheorem{theorem}{Theorem}[section]
\newtheorem{lemma}{Lemma}[section]
\newtheorem{assumption}{Assumption}
\DeclareMathOperator{\supp}{supp}
\journal{Computer Methods in Applied Mechanics and Engineering}
\begin{document}
\maketitle
\begin{frontmatter}



\title{Variational Data-Consistent Assimilation} 

\author[label1]{Rylan Spence}
\author[label2]{Troy Butler}
\author[label1]{Clint Dawson}
\affiliation[label1]{organization={Oden Institute for Computational Science and Engineering, University of Texas},
            addressline={201 E 24th St},
            city={Austin},
            postcode={78712},
            state={TX},
            country={U.S.}}

\affiliation[label2]{organization={Department of Mathematical and Statistical Sciences, University of Colorado Denver},
            addressline={4000 1201 Larimer St},
            city={Denver},
            postcode={80202},
            state={CO},
            country={U.S.}}

\begin{abstract}
This work introduces a new class of four-dimensional variational data assimilation (4D-Var) methods grounded in data-consistent inversion (DCI) theory. 
The methods extend classical 4D-Var by incorporating a predictability-aware regularization term. 
The first method formulated is referred to as Data-Consistent 4D-Var (DC-4DVar), which is then enhanced using a Weighted Mean Error (WME) quantity-of-interest map to construct the DC-WME 4D-Var method. 
While the DC and DC-WME cost functions both involve a predictability-aware regularization term, the DC-WME function includes a modification to the model-data misfit, thereby improving estimation accuracy, robustness, and theoretical consistency in nonlinear and partially observed dynamical systems. 
Proofs are provided that establish the existence and uniqueness of the minimizer and analyze how a predictability assumption that is common within the DCI framework helps to promote solution stability.
Numerical experiments are presented on benchmark dynamical systems (Lorenz-63 and Lorenz-96) as well as for the shallow water equations (SWE). 
In the benchmark dynamical systems, the DC-WME 4D-Var formulation is shown to consistently outperform standard 4D-Var in reducing both error and bias while maintaining robustness under high observation noise and short assimilation windows. 
Despite introducing modest computational overhead, DC-WME 4D-Var delivers improvements in estimation performance and forecast skill, demonstrating its potential practicality and scalability for high-dimensional data assimilation problems.
\end{abstract}



\begin{keyword}
Data Assimilation \sep 4DVar \sep Data-Consistent Inversion \sep Inverse Problems


\end{keyword}

\end{frontmatter}



\section{Introduction}\label{sec:intro}

\subsection{Variational Data-Assimilation Background}
Variational data assimilation refers to a class of data assimilation algorithms in which the estimated fields are obtained by minimizing a scalar objective function (i.e., a cost function) that quantifies the mismatch between model outputs and available observations. 
In the late 1960s, variational data assimilation emerged as a more unified and dynamically consistent alternative to the early numerical weather prediction (NWP) systems that relied on objective analysis methods such as successive corrections and optimal interpolation to generate initial conditions.  
\cite{Sasaki1970} introduced a formulation, distinct from earlier statistical approaches, based on deterministic optimal control, casting the assimilation problem as a constrained minimization problem with the forecast model as an equality constraint.

By the 1990s, operational centers had adopted three-dimensional variational assimilation (3D-Var), which minimized a cost function incorporating background and observational constraints that enabled the direct assimilation of diverse data types with statistically weighted errors.
Furthermore, 3D-Var presented clear advantages for optimization-based approaches, which set the stage for time-continuous extensions such as four-dimensional variational assimilation (4D-Var). 
4D-Var generalizes the 3D-Var framework by incorporating observations over a finite time window, rather than restricting assimilation to a single analysis time. 
\cite{Dimet1986} and \cite{Lewis1985} independently introduced the 4D-Var concept, which seeks the optimal initial state whose model trajectory best fits all observations within the assimilation window. 
The associated cost function typically consists of a background term penalizing deviations from a prior estimate and an observation term penalizing misfits with time-distributed data referred to as either the model-data misfit or more simply as the data-misfit. 
Classical 4D-Var treats the forecast model as a strong constraint by propagating the state forward exactly from the initial time, consistent with the perfect model assumption first proposed by Sasaki. 
The 4D-Var framework offers a key advantage over 3D-Var: it assimilates data at their actual time of availability by using the forecast model to interpolate between observation times, which enables the system to dynamically propagate observational influence, enforce flow-dependent multivariate structure, and generate physically balanced analyses. 

Unlike the standard Kalman filter and other sequential assimilation methods, 4D-Var propagates observational information both forward and backward in time. 
These properties make 4D-Var particularly effective in handling high-frequency, asynchronous, or sparse observations, such as those from satellites and radar. 
Solving the 4D-Var problem generally requires tools from optimal control theory such as adjoint models utilized to compute gradients of the cost function. 
The forecast equations are enforced using Lagrange multipliers, resulting in an Euler–Lagrange system comprising the forward and adjoint models. 
In each optimization iteration, the nonlinear model is integrated forward to compute the data-misfits, followed by a backward integration of the adjoint model to evaluate the gradient with respect to the initial state. 
The adjoint model, defined as the transpose of the linearized forecast model, efficiently computes sensitivities by propagating them backward in time. 
This technique allows for scalable gradient evaluations at a cost comparable to a few model runs, regardless of observation count~\citep{Courtier1987}. 

Developing tangent-linear and adjoint models for a full NWP system remains challenging. 
Every dynamical and physical process in the forecast model must be linearized and differentiated. 
Early 4D-Var implementations relied on simplified physics to maintain numerical stability, but more recent systems include advanced linearized physics schemes, improving accuracy, especially in tropical and convectively active regions. 
Because strong-constraint 4D-Var is computationally demanding, operational centers adopt incremental 4D-Var~\citep{Courtier1994}, which linearizes the problem around a background trajectory and solves a sequence of lower-dimensional subproblems. 
Another refinement is weak-constraint 4D-Var, which relaxes the perfect-model assumption by allowing an explicit model error terms in the control vector~\citep{Lewis1985}. 
This extension is critical for longer assimilation windows or models with structural uncertainty. 
4D-Var builds upon the deterministic, optimal control foundations established by Sasaki and extends the variational approach into the time dimension. 
Supported by adjoint-based gradient computation and strengthened by successive algorithmic refinements, 4D-Var continues to serve as a cornerstone of high-resolution, time-continuous data assimilation in modern numerical weather prediction~\citep{Gustafsson2012,Rabier2000}.

\subsection{Data-Consistent Inversion (DCI) Background}
Inverse problems have long been concerned with recovering parameters that best explain observed data, often through optimization techniques or Bayesian inference. 
However, traditional approaches typically aim to identify a single optimal parameter or a posterior distribution that quantifies uncertainties for a particular estimate of a parameter based on a likelihood model. 
In contrast, data-consistent inversion (DCI) is based on a reconceptualization of the inverse problem in a measure-theoretic context. 
Instead of identifying point estimates or conditional densities, the inferential target of DCI is a probability measure over the parameter space whose subsequent push-forward through the forward model reproduces the observed distribution on the output space. 
In other words, the DCI solution is a pullback of the observed probability measure. 
The roots of this framework are found in~\cite{Butler2012,Breidt2011,Butler2014}, where ``inverse sensitivity'' methods are utilized to quantify how uncertainties in model inputs translate to uncertainties in outputs. 
In this formulation, the solution to the inverse problem is defined as any measure on the input space whose pushforward matches the law of the observed data. 
The early work of~\cite{Butler2013} analyzed the consistency and convergence of foundational algorithms that explicitly approximated such input measures based on the intersection of specified parameter events with ``generalized contour events'' defined by the inverse image of output events of known, or approximated, probabilities.
The formulation proved particularly useful in problems with limited data or non-invertible forward maps, where traditional methods often fail.

This type of stochastic inverse problem is often underdetermined and may admit infinitely many solutions, necessitating regularization or selection criteria to obtain physically meaningful results \citep{Bingham2024}. 
To resolve this non-uniqueness,~\cite{Butler2018} proposed a formulation that introduces a Bayesian-style update to an ``initial'' distribution. 
This update ensures that the revised input distribution both honors prior information while simultaneously having the push-forward match the data distribution. 
The result is a unique (up to the choice of the initial distribution) data-consistent distribution referred to simply as ``the update.'' 
This forms the foundation of the modern DCI framework. 
Applications have followed in diverse areas.
\cite{Butler2020} employed DCI for optimal experimental design, identifying experimental conditions that minimize uncertainty in the pushforward. 
\cite{Tran2021} showed that DCI integrates naturally with machine learning surrogates, making it feasible to solve high-dimensional inverse problems where likelihoods are intractable or unavailable.

Building on this foundation,~\cite{Pilosov2023} introduced the concept of Maximal Updated Densities (MUD) as a non-iterative update rule that identifies and quantifies uncertainties in point estimates of input values. 
Unlike Bayesian updates, a MUD estimate requires no likelihood function and is well-suited for expensive or implicit forward models. 
This methodology was further extended to the time-dependent setting via Sequential MUD (SMUD) estimation~\citep{del-Castillo-Negrete2024}, which updates the input density incrementally as new data arrive, enabling real-time assimilation while preserving data-consistency of the push-forward. 
These recent developments provide computationally efficient and theoretically grounded alternatives to traditional Bayesian inference that are particularly attractive in modern inverse problems where full probabilistic characterizations are needed and likelihoods are either inaccessible or prohibitively costly to compute.

DCI has also begun to influence applied fields such as hydrology and meteorology. 
For example,~\cite{Graham2015} inferred distributions over Manning’s roughness coefficients in a hydrodynamic model to match uncertainty in observed water height distributions. 
In meteorology applications, \cite{Fleury2025} utilize DCI to derive parameter distributions that improve the statistical consistency (bias reduction) of ensemble forecasts in atmospheric modeling. 
Although not yet widely used in operational NWP, DCI offers a new path forward by assimilating full output distributions that reflect the uncertainty in observed data, improving uncertainty quantification in forecast systems.

\subsection{Benefits to Merging 4D-Var and DCI}
DCI is an inversion method that addresses aleatoric (i.e., irreducible) uncertainty through probability densities \citep{Butler2018} and epistemic (i.e., reducible) uncertainty through Maximal Updated Densities \citep{Pilosov2023}. 
As of this writing, limited work has been done to reformulate the DCI framework into a deterministic optimization problem. 
In the context of data assimilation, such a reformulation corresponds to a shift from filtering and smoothing algorithms to a variational prospective.
This work focuses specifically on the development and analysis of such a DCI reformulation as an alternative to traditional 4D-Var that can provide specific benefits in certain scenarios. 
We provide two reformulations referred to as Data-Consistent 4D-Var (DC 4D-Var) and Data-Consistent Weighted Mean Error 4D-Var (DC-WME 4D-Var).
The DC-WME 4D-Var proves to have many benefits due to its incorporation of predictability-aware regularization and quantity-of-interest (QoI)-focused optimization. 
Below, we give a high-level summary of the benefits of DC-WME 4D-Var over traditional 4D-Var. 

As we increase the number of state variables, standard 4D-Var suffers from underdetermined inference in sparse observation settings. 
When only partial observations are available, such as observing every other component of the state, the number of unobserved components grows with system size. 
This growth introduces greater uncertainty in unobserved directions. 
Because standard 4D-Var does not explicitly model or propagate this uncertainty, its estimates become increasingly biased and less accurate in those under-observed dimensions. 
In DC-WME 4D-Var, the predictability-aware regularization introduces model-derived information about which directions in state space exhibit greater or lesser stability or uncertainty over time. 
This structure allows the assimilation scheme to penalize errors more strongly in predictable directions while allowing more flexibility in uncertain or weakly constrained modes. 

Another major challenge in 4D-Var arises from the poor conditioning of the optimization problem. 
As the system dimension increases, the cost function becomes more ill-conditioned. 
The background and observation covariance matrices must capture increasingly complex error structures, which we often find difficult to specify or approximate in practice. 
This poor conditioning can slow the convergence of the optimization routine or lead it to converge to suboptimal solutions.
The QoI-focused optimization of DC-WME 4D-Var shifts the objective from uniformly fitting the entire state trajectory to prioritizing accuracy in components of the solution most relevant to the forecast or application, such as specific observations or downstream functions of the state. 
Together, these modifications produce a more balanced and targeted assimilation strategy that accounts for both model dynamics and observational coverage. 
As a result, DC-WME 4D-Var maintains robust performance even as the system dimensionality increases, mitigating the degradation typically observed in high-dimensional settings.

Another known issue in standard 4D-Var is the high sensitivity to mismatches between model and observation error statistics. 
In large systems, even small model errors or incorrect assumptions about observational noise can have outsized effects, especially in state directions that the data weakly constrain. 
Because standard 4D-Var lacks a mechanism for adaptively weighting data based on informativeness or predictability, it becomes prone to assimilating low-quality or misleading observations. 
The cost function imposes uniform penalties using fixed covariance matrices, regardless of the dynamical importance or uncertainty associated with different state directions. 
As a result, the algorithm may overfit well observed directions while underfitting critical but unobserved modes, which is a particularly severe issue in high dimensional, chaotic systems such as Lorenz-96. 
The DC-WME 4D-Var further explicitly accounts for model uncertainty. 
When the model is subject to significant epistemic error, probabilistic calibration ensures that its predictions remain consistent with observed outcomes, improving reliability even in partially observed regimes. 
By incorporating a predictability constraint, the DC-WME 4D-Var method identifies which aspects of the model output are both informative and stable over time, allowing the assimilation algorithm to focus on the most meaningful components of the solution. 
This principled filtering of state information enhances robustness and interpretability, particularly in high-dimensional systems where traditional methods struggle.

\subsection{Contributions}

This work presents several key contributions to the development and application of DCI-based 4D-Var. 
It begins by introducing Data-Consistent Inversion (DCI) to the field of data assimilation for state estimation, bridging concepts from uncertainty quantification and statistical inversion with dynamic modeling. 
Building on this foundation, the DCI framework is reformulated from its original probabilistic setting into a variational one, enabling seamless integration into 4D-Var methodologies. 
Two novel DC-based 4D-Var algorithms are then introduced to enhance the assimilation of observational data culminating in the preferred DC-WME 4D-Var method. 
The adjoint method is developed and implemented for multiple QoI maps within the DC-WME 4D-Var framework, allowing for scalable and efficient gradient computation. 
Theoretical analysis establishes the existence and uniqueness of minimizers for the new methods, ensuring the well-posedness of the optimization problem. 
These methods are applied to a range of nonlinear weather models, demonstrating their accuracy and robustness. 
Lastly, the framework is extended to a coastal application, highlighting its versatility and effectiveness in real-world geophysical systems.

\section{Background on DCI-Based Point Estimation}\label{sec:Background}

This section provides the necessary background on point estimation within the DCI framework that is required to develop the DC and DC-WME 4D-Var methods.
We first introduce some notation.
Let the DCI solution be denoted by 
\begin{equation}
        \pi_{\mathrm{up}} \left(\mathbf{z}_{0}\right) := \pi_{\mathrm{init}} \left(\mathbf{z}_{0}\right) \frac{\pi_{\mathrm{obs}}\left( Q\left(\mathbf{z}_{0}\right) \right) }{\pi_{\mathrm{pred}} \left( Q \left(\mathbf{z}_{0}\right) \right)}, \label{eq:dci_problem1}
\end{equation}
where $\mathbf{z}_0$ denotes an initial state, $\pi_{\mathrm{init}}$ quantifies the initial (background) uncertainty in that state, $Q$ is the QoI map from states to observations, $\pi_{\mathrm{pred}}$ is the push-forward of $\pi_{\mathrm{init}}$ through $Q$, and $\pi_{\mathrm{obs}}$ quantifies uncertainties in the observed QoI. 
We refer the interested reader to Appendices~\ref{app:consistency}-\ref{app:MUD} and the references therein for a more general overview of DCI.


\subsection{Defining the MUD State Estimation}\label{subsec:mud_state_estimate}

We first assume that the initial, predicted, and observed distributions are given by the Gaussian distributions, $\mathcal{N}(\mathbf{z}_0^b, \mathbf{B})$, $\mathcal{N}(Q(\mathbf{z}_0^b), \mathbf{L})$, and $\mathcal{N}(\mathbf{y}, \mathbf{R})$, respectively. 
Similar to linear Gaussian inversion problems within the Bayesian framework, there is a connection between the data-consistent problem and its solution, $\pi_{\mathrm{up}}$, and more traditional variational inverse problem approaches. 
Consider the following optimization problem
\begin{equation} 
    z^{\mathrm{MUD}}:=  \underset{\mathbf{z}_{0}\in \mathcal{Z}}{\arg\max} \  \pi_{\mathrm{up}} \left(\mathbf{z}_{0} \right)
    = \underset{\mathbf{z}_{0} \in \mathcal{Z}}{\arg \max} \  \exp \left( -  \mathcal{J}\left(\mathbf{z}_{0}\right)   \right), \label{eq:mud_exp}
\end{equation}
where we define the data-consistent (DC) objective function $\mathcal{J}:\mathcal{Z} \to \mathbb{R}\cup \{\infty\}$ as 
\begin{equation}
\begin{split}
    \mathcal{J}\left(\mathbf{z}_{0}\right)  
    = \frac{1}{2}\| \mathbf{z}_{0} - \mathbf{z}_0^b \|_{\mathbf{B}^{-1}} 
    &+ \frac{1}{2}\| Q\left(\mathbf{z}_{0} \right) - \mathbf{y}\|_{\mathbf{R}^{-1}}  \\
    &- \frac{1}{2}\| Q\left(\mathbf{z}_{0} \right) - Q\left(\mathbf{z}_0^b\right) \|_{\mathbf{L}^{-1}} . \label{eq:dc_obj}
\end{split}
\end{equation}

When $Q$ is a linear operator, rewriting the DC objective function as
\begin{equation}
      \mathcal{J}\left(\mathbf{z}_{0}\right)  = \frac{1}{2}\| \mathbf{z}_{0} - \mathbf{z}_0^b \|_{\Psi^{-1}} +  \frac{1}{2}\|Q\left(\mathbf{z}_{0} \right) - \mathbf{y} \|_{\mathbf{R}^{-1}},   \label{eq:dc_obj_psi}
\end{equation}
where $\Psi^{-1} = \mathbf{B}^{-1} - Q^\top\mathbf{L}^{-1} Q$, represents this as the cost function associated with a modified Bayesian inverse problem with a prior distribution given by $ \mathcal{N} \left(\mathbf{z}_0^b,\Psi\right) $. 
In other words, we can view both the MUD and MAP points as solutions to distinct Bayesian inverse problems. 
Furthermore, in the context of data assimilation,~\eqref{eq:dc_obj_psi} serves as a modified version of the function used in the 3D-Var to compute the optimal state estimate for a given assimilation window of data.


\subsection{Existence and Uniqueness of MUD Point}\label{subsec:existence_unique_mud}

Strict convexity of a cost function ensures that if a solution exists to the associated deterministic optimization problem, the extrema is a strict, and therefore unique, minimizer or maximizer. 
In Appendix~\ref{append:convexity} we prove that the DC objective functions utilized in this work are strictly convex. 
Of particular note that in the convexity theorem is that the Hessian depends only on the background and ratio precision matrices while not being dependent on the state $\mathbf{z}_{0}$. 
This result highlights a key feature of the DCI-based solution, which is the weighting of the difference in uncertainty between what is being observed from an underlying system and model predictions. 
Furthermore, akin to conventional data assimilation theory, we later demonstrate that the inverse of the Hessian serves as the updated covariance. While a similar result was derived via linear algebra in~\cite{Pilosov2023} in terms of system parameters, this work provides the proof in the variational setting along with an alternative perspective on the original result based on the state of the system.


\subsection{Ratio Precision Matrix and the Predictability Assumption}\label{subsec:ratio_matrices}

Note that the updated density can be written as
\begin{equation}
    \pi_{\mathrm{up}}(\mathbf{z}_0) = \pi_{\mathrm{init}}(\mathbf{z}_0)r(\mathbf{z}_0), \ r(\mathbf{z}_0):= \frac{\pi_{\mathrm{obs}}\left( Q\left(\mathbf{z}_{0}\right) \right) }{\pi_{\mathrm{pred}} \left( Q \left(\mathbf{z}_{0}\right) \right)}.
\end{equation}

Both~\cite{Butler2018} and~\cite{Pilosov2023} utilize the expected value of $r$ as a metric to understand the validity of results in DCI problems. 
Specifically, the sample average (with respect to samples drawn from the initial distribution) of $r$ should be approximately unity.
This is guaranteed if sufficient approximations to the densities are utilized and the predictability assumption described in Theorem~\ref{thm:sip_exist_unique} of
Appendix~\ref{app:DCI_solution} is satisfied.
At a conceptual level, the predictability assumption simply states that we must be able to predict what we are likely to observe. 

To understand the predictability assumption in the linear Gaussian case, we first define the ratio precision matrix as $\mathbf{W}^{-1} := \mathbf{R}^{-1} - \mathbf{L}^{-1}$.
A conservative approach to ensure that the predictability assumption holds is to have the minimum singular value of $\mathbf{L}$ be greater than the maximum singular value of $\mathbf{R}$, i.e., $\sigma_{\mathrm{min}}\left( \mathbf{L}\right) > \sigma_{\mathrm{max}}\left( \mathbf{R}\right)$, which implies $\sigma_{\mathrm{min}}\left( \mathbf{R}^{-1}\right) > \sigma_{\mathrm{max}}\left( \mathbf{L}^{-1}\right)$. 
At a conceptual level, this ensures that the slowest asymptotic decay of the observed distribution in a particular direction is still faster than the asymptotic decay of the predicted distribution in any direction. 
Subsequently, $\mathbf{W}^{-1}$ is symmetric positive definite (SPD) and thus invertible with $\mathbf{W}$ defining the ratio covariance. 
In~\cite{Pilosov2023}, it is shown that if $Q$ is defined by a weighted mean error map for linear observations, then after a sufficient amount of observable data are collected, it is guaranteed that $\sigma_{\mathrm{min}}\left( \mathbf{R}^{-1}\right) > \sigma_{\mathrm{max}}\left( \mathbf{L}^{-1}\right)$. 
It is worth noting that, in practice, this is a stronger condition than is generally necessary to ensure that $\mathbf{W}^{-1}$ is SPD.
We demonstrate this in the numerical results by relaxing this condition.

\section{DCI-based 4D-Var Methodology}\label{sec:Methodology}
4D-Var data assimilation seeks an optimal model trajectory that best fits all available observations within a short analysis window by adjusting the initial conditions of a numerical forecast model~\citep{Talagrand1987}. 
The method is formulated as the minimization of a cost function defined over this window, where each term is weighted according to the estimated error statistics of both the forecast model and the observations. 
In its ``strong constraint'' formulation, 4D-Var assumes that the model dynamics are perfect, meaning they are free of error. 
This work adopts the strong constraint formulation of 4D-Var. 
Under this assumption, at any time $ t_k $, denote by $ \mathbf{z}_k$ the true model state that is entirely determined by propagating the initial state $\mathbf{z}_0$ through the model dynamics, i.e., 
\begin{equation}
    \mathbf{z}_k = \mathcal{M}_{k:0}(\mathbf{z}_0), 
\end{equation}
where $ \mathcal{M}_{k:0} $ denotes the composition of the model maps from time $ t_0 $ to time $ t_k $. For example, assuming a sequential model structure:
\begin{equation}
\begin{split}
    \mathbf{z}_1 &= \mathcal{M}_{0}(\mathbf{z}_0), \\
    \mathbf{z}_2 &= \mathcal{M}_{1}(\mathbf{z}_1) 
                 = \mathcal{M}_{1} \circ \mathcal{M}_{0}(\mathbf{z}_0), \\
                 &\;\;\vdots \\
    \mathbf{z}_k &= \mathcal{M}_{k-1} \circ \cdots \circ \mathcal{M}_{0}(\mathbf{z}_0) 
                 = \mathcal{M}_{k:0}(\mathbf{z}_0). \label{eq:strong_constraint}
\end{split}
\end{equation}
In practice, we start with a ``background'' estimate of the initial state, denoted by $\mathbf{z}_0^b$ that is associated with the error covariance matrix $ \mathbf{B} $. 
The observational error covariance is given by the matrix $ \mathbf{R} $. 

Thus, in strong-constraint 4D-Var, the entire trajectory $\{\mathbf{z}_k\}_{k=0}^N$ is a deterministic function of the initial state $ \mathbf{z}_0 $, and the optimization is performed only over $\mathbf{z}_0$. 
The 4D-Var analysis, denoted by $\mathbf{z}^{\mathrm{a}}$, is obtained by minimizing the cost function
\begin{equation}
\begin{split}
    \mathcal{J}(\mathbf{z}_{0}) 
    &= \left(\mathbf{z}_{0}-\mathbf{z}_0^b\right)^{\top} 
       \mathbf{B}^{-1}\left(\mathbf{z}_{0}-\mathbf{z}_0^b\right) \\
    &\quad + \sum_{k=0}^{N} 
       \left(\mathcal{H}_k(\mathbf{z}_{k})-\mathbf{y}_k\right)^{\top} 
       \mathbf{R}^{-1}\left(\mathcal{H}_{k}(\mathbf{z}_{k})-\mathbf{y}_k\right). \label{eq:4dvar_cost}
\end{split}
\end{equation}
which quantifies the discrepancy between the model trajectory and the observations at discrete time points $ t_k $ for $ k = 0, \dots, N $.
Here, $\mathcal{H}_k$ denotes the observation operator that maps the state, $\mathbf{z}_k$, to model predicted data that are subsequently compared to (noisy) observed data, $\mathbf{y}_k$, at time $t_k$.
Through this minimization, 4D-Var estimates the optimal state trajectory, ensuring it remains close to observed values within the assimilation window while also aligning with the background state at the window’s start.
Mathematically, the analysis $ \mathbf{z}_{0}^a $ at time $ t_0 $ corresponds to the initial state $ \mathbf{z}_{0} $ that minimizes the cost function~\eqref{eq:4dvar_cost} subject to the model constraint of~\eqref{eq:strong_constraint}.

Computing the optimal state at every time step is computationally prohibitive. However, by reducing the control variable, the method modifies only the initial conditions, enforcing strict adherence to the model equations throughout the trajectory~\citep{Dimet1986}. 
The cost function is minimized by adjusting the model’s initial condition $ \mathbf{z}_{0} $ at $ t_{0} $. 
This process requires integrating the full nonlinear model forward to the end of the assimilation window to compute the cost function, followed by a backward integration of the adjoint model to determine the cost function’s gradient with respect to $ \mathbf{z}_{0} $. 
An iterative descent algorithm then uses this gradient information. 
The final state at the end of the assimilation window then serves as the initial guess for the next assimilation cycle, i.e., $\mathbf{z}_N$ serves as the new initial state.


\subsection{Data-Consistent 4D-Var (DC 4D-Var)}
We now seek to merge the DCI and 4D-Var frameworks by connecting the 4D-Var cost function of \eqref{eq:4dvar_cost} to the DC cost function of \eqref{eq:dc_obj}.
To do this, we first identify the $k$th QoI map as $Q_k = \mathcal{H}_k \circ\mathcal{M}_{k:0}$.
Then, we introduce the following modified penalty term defining the predictability-aware regularization term:

\begin{equation}
    \frac{1}{2} \sum_{k=0}^{N} \left( Q_k \mathbf{z}_0 - Q_k \mathbf{z}_0^b \right)^\top \mathbf{L}_k^{-1} \left( Q_k \mathbf{z}_0 - Q_k \mathbf{z}_0^b \right) \label{eq:predict_norm}
\end{equation}
The full modified DC cost function becomes:
\begin{equation}
\begin{split}
    \mathcal{J}_{\mathrm{DC}}(\mathbf{z}_0) 
    :=& \;\frac{1}{2} \left( \mathbf{z}_0 - \mathbf{z}_0^b \right)^\top 
        \mathbf{B}^{-1} \left( \mathbf{z}_0 - \mathbf{z}_0^b \right) \\
    &+ \frac{1}{2} \sum_{k=0}^{N} 
        \left( \mathbf{y}_k - Q_k \mathbf{z}_{0} \right)^\top 
        \mathbf{R}_k^{-1} \left( \mathbf{y}_k - Q_k \mathbf{z}_{0} \right) \\
    &- \frac{1}{2} \sum_{k=0}^{N} 
        \left( Q_k \mathbf{z}_{0} - Q_k \mathbf{z}_0^b \right)^\top 
        \mathbf{L}_k^{-1} \left( Q_k \mathbf{z}_{0} - Q_k \mathbf{z}_0^b \right). \label{eq:dc_4dvar_cost}
\end{split}
\end{equation}
subject to the model constraints of~\eqref{eq:strong_constraint}. Note that $\{Q_{k}\mathbf{z}_{0}^{b}\}_{k=0}^{N}$ defines the observations associated with the model-predicted trajectory of the prior mean. This trajectory plays a critical role in the regularization as opposed to standard 4D-Var, which only includes $\mathbf{z}_0^b$ in the first penalty term. Specifically, the inclusion of the predictability-aware regularization term~\eqref{eq:predict_norm} promotes data-consistency in that it serves as a type of ``targeted un-regularization'' that reduces the specific impact of any statistical bias in the prior only in the directions of the state space implicitly informed by the QoI, i.e., the observable portions of the trajectories. 


\subsection{Hessian of the DC 4D-Var Cost function}
To compute the Hessian of~\eqref{eq:dc_4dvar_cost}, we consider the second derivative of each term individually. 
The first term, corresponding to the background mismatch, is quadratic in $ \mathbf{z}_0 $ and yields a constant second derivative:
\begin{equation}
    \nabla^2\left(\frac{1}{2}\left\|\mathbf{z}_0 - \mathbf{z}_0^b\right\|_{\mathbf{B}^{-1}}^2\right) = \mathbf{B}^{-1}.
\end{equation}
For the data misfit term, we define the residual $ \mathbf{r}_k(\mathbf{z}_0) := \mathbf{y}_k - Q_k(\mathbf{z}_0) $ and let $ \mathbf{J}_k = \frac{\partial Q_k(\mathbf{z}_0)}{\partial \mathbf{z}_0} $ denote the Jacobian of the QoI map. Then, 
\begin{equation}
    \nabla^2\left(\frac{1}{2} \mathbf{r}_k^\top \mathbf{R}_k^{-1} \mathbf{r}_k\right)
    = \mathbf{J}_k^\top \mathbf{R}_k^{-1} \mathbf{J}_k + (\nabla^2 \mathbf{r}_k)^\top \mathbf{R}_k^{-1} \mathbf{r}_k.
\end{equation}
Utilizing the Gauss-Newton approximation, we neglect the second-order term $ (\nabla^2 \mathbf{r}_k)^\top \mathbf{R}_k^{-1} \mathbf{r}_k $, giving:
\begin{equation}
    \nabla^2 \left(\frac{1}{2}\|\mathbf{r}_k\|_{\mathbf{R}_k^{-1}}^2\right) \approx \mathbf{J}_k^\top \mathbf{R}_k^{-1} \mathbf{J}_k.
\end{equation}
For the predictability-aware term, we define the correction residual as $ \mathbf{q}_k(\mathbf{z}_0) := Q_k(\mathbf{z}_0) - Q_k(\mathbf{z}_0^b) $.
Following the same steps as with the data misfit term, its second derivative is approximated as
\begin{equation}
    \nabla^2 \left(\frac{1}{2} \|\mathbf{q}_k\|_{\mathbf{L}_k^{-1}}^2 \right) \approx \mathbf{J}_k^\top \mathbf{L}_k^{-1} \mathbf{J}_k.
\end{equation}
Combining the above, the full Gauss-Newton approximation to the Hessian becomes:
\begin{equation}
    \nabla^2 \mathcal{J}_{\mathrm{DC}}(\mathbf{z}_0) \approx \mathbf{B}_k^{-1}
    + \sum_{k=0}^N \mathbf{J}_k^\top \left( \mathbf{R}_k^{-1} - \mathbf{L}_k^{-1} \right) \mathbf{J}_k.
\end{equation}

\paragraph{Special Case: Linear Model}

If the forward and observation operators $ \mathcal{M}_k $ and $ \mathcal{H}_k $ are linear, then the QoI map satisfies $ Q_k(\mathbf{z}_0) = \mathcal{H}_k \mathcal{M}_{k:0} \mathbf{z}_0 $, and $ \mathbf{J}_k = \mathcal{H}_k \mathcal{M}_{k:0} $ is constant. 
In that case, the exact Hessian is:
\begin{equation}
    \nabla^2 \mathcal{J}_{\mathrm{DC}}(\mathbf{z}_0) =
    \mathbf{B}_k^{-1} + \sum_{k=0}^N \mathcal{M}_{k:0}^\top \mathcal{H}_k^\top \left( \mathbf{R}_k^{-1} - \mathbf{L}_k^{-1} \right) \mathcal{H}_k \mathcal{M}_{k:0}.
\end{equation}

\paragraph{General Case: Nonlinear Model}

If $ \mathcal{M}_k $ or $ \mathcal{H}_k $ are nonlinear, then the Jacobian $ \mathbf{J}_k = \mathbf{H}_k \mathbf{M}_k $ varies with $ \mathbf{z}_0 $ and must be recomputed at each iteration. 
Nevertheless, the approximate Hessian retains the same algebraic structure:
\begin{equation}
    \nabla^2 \mathcal{J}_{\mathrm{DC}}(\mathbf{z}_0) \approx
    \mathbf{B}_k^{-1} + \sum_{k=0}^N \mathbf{M}_k^\top \mathbf{H}_k^\top \left( \mathbf{R}_k^{-1} - \mathbf{L}_k^{-1} \right) \mathbf{H}_k \mathbf{M}_k  \label{eq:dc_hessian}
\end{equation}
where the tangent linear model $ \mathbf{M}_k = \frac{\partial \mathcal{M}_{k:0}}{\partial \mathbf{z}_0} $ is evaluated at $ \mathbf{z}_0 $.
\subsection{Updated Covariance Structure}
In a manner analogous to traditional 4D-Var, the posterior covariance in the DC 4D-Var formulation is given by the inverse of the Hessian of $ \mathcal{J}_{\mathrm{DC}} $. 
Under the Gauss-Newton approximation, the updated posterior covariance takes the form:
\begin{equation}
    \mathbf{P}_{\mathrm{DC}}^{\text{up}} =
    \left[
    \mathbf{B}^{-1}
    + \sum_{k=0}^N \mathbf{M}_k^\top \mathbf{H}_k^\top \left( \mathbf{R}_k^{-1} - \mathbf{L}^{-1} \right) \mathbf{H}_k \mathbf{M}_k
    \right]^{-1}.
\end{equation}
We can express this more compactly as:
\begin{equation}
    \mathbf{P}_{\mathrm{DC}}^{\text{up}} =
    \left[ \mathbf{B}^{-1} + \sum_{k=0}^N \mathbf{M}_k^\top \mathbf{H}_k^\top \mathbf{W}_k^{-1} \mathbf{H}_k \mathbf{M}_k \right]^{-1}.
\end{equation}
The ratio precision matrix $ \mathbf{W}_k^{-1} $ plays a critical role in adjusting the posterior uncertainty based on the relative informativeness of the observations and the model prediction. 
For instance, if $ \mathbf{R}_k = \mathbf{H}_k \mathbf{B} \mathbf{H}_k^\top $, then $ \mathbf{W}_k^{-1} = 0 $, and the observation contributes no additional information beyond what is already predicted by the model. 
If $ \mathbf{R}_k < \mathbf{H}_k \mathbf{B} \mathbf{H}_k^\top $, then $ \mathbf{W}_k^{-1} \succ 0 $, indicating that the observation is more informative than the model and should be used to reduce uncertainty. Conversely, if $ \mathbf{R}_k > \mathbf{H}_k \mathbf{B} \mathbf{H}_k^\top $, then $ \mathbf{W}_k^{-1} \prec 0 $, meaning the observation is less informative than the model’s prediction and should be down-weighted in the analysis. 
This, of course, results in a violation of the predictability assumption. 
Subsequently, the predictability-aware term provides a structure that ensures that observations are trusted only when they improve upon the model’s predicted uncertainty {\em if the predictability assumption holds.}
\subsection{DC Weighted Mean Error 4D-Var (DC-WME 4D-Var)}
As previously mentioned, \cite{Pilosov2023} demonstrated that if $Q$ is defined by a weighted mean error (WME) map for linear observations, then the predictability assumption is guaranteed to hold after a sufficient amount of observable data are collected.
The subsequent MUD point estimate was shown to have several desirable properties.
For instance, uncertainties in the MUD estimate, quantified by covariances, were shown to decrease in the input directions informed by the measurement operators at a rate proportional to the number of observed data used for each measurement.
Additionally, propagating the MUD estimate through the solution operator to the model was shown to produce an unbiased estimate of the sample mean of the observed data\footnote{We refer the interested reader to equations (9)-(13) and Theorem 4.1 and Corollary 1 of~\cite{Pilosov2023} for more details.}.
This motivates the adoption of a similar Weighted Mean Error (WME) QoI map, denoted as $Q_{\mathrm{wme}}$ in this work, and defined as:
\begin{equation}
    Q_{\mathrm{wme}}\left(\mathbf{z}_0\right) := \frac{1}{\sqrt{N}} \sum_{k=1}^{N} \mathbf{R}_k^{-1/2}\left[\mathcal{H}_k \circ \mathcal{M}_{k:0}\left(\mathbf{z}_0\right)-\mathbf{y}_{k}\right].   \label{eq:wme_map}
\end{equation}
This definition captures the scaled residuals between predicted and observed data at time $t_{k,}$. Using this map we obtain the DC-WME 4D-Var cost function as
\begin{equation}
\begin{split}
    \mathcal{J}_{\mathrm{DC}}\left(\mathbf{z}_0\right) 
    &= \frac{1}{2}\left\|\mathbf{z}_0-\mathbf{z}_0^{\mathrm{b}}\right\|_{\mathbf{B}_0^{-1}}^2 
     + \frac{1}{2}\left\|Q_{\mathrm{wme}}\left(\mathbf{z}_0\right)\right\|^2 \\
    &\quad - \frac{1}{2}\left\|Q_{\mathrm{wme}}\left(\mathbf{z}_0\right) 
        - Q_{\mathrm{wme}}\left(\mathbf{z}_0^{\mathrm{b}}\right)\right\|_{\mathbf{L}_\mathrm{wme}^{-1}}^2 .     \label{eq:wme_4dvar_func}
\end{split}
\end{equation}
Note the second term utilizes the Euclidean norm (due to the construction of the WME map) and $\mathbf{L}_\mathrm{wme}$ denotes the predicted covariance associated with the WME map.

Before we continue, we first briefly compare and contrast the three separate cost functions presented thus far. 
Clearly, the first terms associated with the background knowledge in equations~\eqref{eq:4dvar_cost},~\eqref{eq:dc_4dvar_cost}, and~\eqref{eq:wme_4dvar_func} are all identical. 
By expanding the Euclidean norm, we see that the data-misfit term (i.e., the second term) in~\eqref{eq:wme_4dvar_func} is just a scaled version (by a factor of $1/N$) of the data-misfit terms in~\eqref{eq:4dvar_cost} and~\eqref{eq:dc_4dvar_cost}.
However, the predictability-aware regularization terms are fundamentally different in~\eqref{eq:dc_4dvar_cost} and~\eqref{eq:wme_4dvar_func}.
Specifically, the incorporation of the weighted residuals in the WME map implies that the observed data play a specific role in the predictability-aware regularization.
Just as the WME map played a significant role in reducing uncertainties in MUD point estimates in both \cite{Pilosov2023} and \cite{del-Castillo-Negrete2024} over a naive implementation of MUD point estimation, we similarly demonstrate it improves the performance of DC-based 4D-Var. 

To make the following analysis more tractable, we first apply the standard assumption that the state vectors are observed with independent observational errors following the observation model:
\begin{equation}
    \mathbf{y}_k=Q_{k}\left(\mathbf{z}_{0}^\dagger\right)+\mathbf{\varepsilon}_{k}, \quad \mathbf{\varepsilon}_{k} \sim \mathcal{N}\left(\boldsymbol{0}, \sigma_{\mathrm{obs},k}^2 \boldsymbol{I}_{d\times d}\right)    \label{eq:observation_equation}
\end{equation}
where we again make use of the shorthand notation $Q_k=\mathcal{H}_k\circ\mathcal{M}_{k:0}$.
Note that since the observed data follow the true system state obtained by propagating some true (but unknown) $ \mathbf{z}_{0}^{\dagger} $ as assumed in~\eqref{eq:observation_equation}, this subsequently implies that the observable QoI data (i.e., the data obtained by observing the QoI for the true trajectory) follows a $ \mathcal{N}\left(\mathbf{0}_{d \times 1}, \mathbf{I}_{d \times d}\right) $ distribution. 
This approach subsequently keeps the observed QoI distribution unchanged regardless of the number of measurements, simplifying algorithmic implementation. 
Another common assumption is that the observational covariance is temporally invariant, in which case we can drop the $k$ index on $\sigma_{\text{obs},k}$, and from~\eqref{eq:wme_map} it follows that
\begin{equation}
    \mathbf{L}_\mathrm{wme} \approx \frac{N}{\sigma_{\text{obs}}^2}  \mathbf{Q}_1 \mathbf{B} \mathbf{Q}_1^\top \in \mathbb{R}^{d \times d}  \label{eq:wme_pred_cov}
\end{equation}
where $\mathbf{Q}_1$ denotes the linear approximation to the map $Q_1$. 
\subsection{Gradient of the DC-WME 4D-Var Cost function}
For notational brevity, we define $Q := Q_{\mathrm{wme}}(\mathbf{z}_0)$ and $ Q^{\mathrm{b}} := Q_{\mathrm{wme}}(\mathbf{z}_0^{\mathrm{b}}) $. 
Assuming sufficient regularity, the gradient of the cost function with respect to the initial condition $ \mathbf{z}_0 $ is given by
\begin{equation}
\begin{split}
\nabla \mathcal{J}_{\mathrm{DC}}(\mathbf{z}_0) 
    &= \mathbf{B}_0^{-1} \left( \mathbf{z}_0 - \mathbf{z}_0^{\mathrm{b}} \right) \\
    &\quad + \left( D_{\mathbf{z}_0} Q(\mathbf{z}_0)\right)^\top Q \\
    &\quad - \left( D_{\mathbf{z}_0} Q(\mathbf{z}_0) \right)^\top 
        \mathbf{L}_{\mathrm{wme}}^{-1} \left( Q - Q^{\mathrm{b}} \right).
\end{split}
\end{equation}
where $ D_{\mathbf{z}_0} Q(\mathbf{z}_0) $ denotes the Jacobian of the WME map with respect to the initial condition. 
If the observation operator $ \mathcal{H} $ and the forward model $ \mathcal{M}_{0} $ are differentiable, and $ \mathcal{H} $ is linear, then the Jacobian takes the form
\begin{equation}
D_{\mathbf{z}_0} Q\left(\mathbf{z}_0\right)=\frac{1}{\sqrt{N}} \sum_{k=1}^{N} \frac{1}{\sigma_{\mathrm{obs}, k}} \mathcal{H}_k \circ \mathbf{M}_{k: 0}\left(\mathbf{z}_0\right)     \label{eq:wme_qoi_jacobian}
\end{equation}
where $\mathbf{M}_{k: 0}\left(\mathbf{z}_0\right):=D_{\mathbf{z}_0} \mathcal{M}_{k: 0}\left(\mathbf{z}_0\right)$ represents the sensitivity of the model state at time $t_k$ with respect to the initial condition.

\subsection{Data Consistent 4D-Var Adjoint}
To derive the adjoint equations associated with the DC 4D-Var cost function of~\eqref{eq:dc_4dvar_cost}, we introduce Lagrange multipliers $ \boldsymbol{\lambda}_{k+1} $ to enforce the discrete model constraints $ \mathbf{z}_{k+1} = \mathcal{M}_k(\mathbf{z}_k) $ for each time index $ k = 0, \dots, N-1 $ as well as $\boldsymbol{\lambda}_0$ to enforce the initial state. 
We begin with the augmented Lagrangian function:
\begin{equation}
\begin{split}
    \mathcal{L} 
    &= \mathcal{J}_{\mathrm{DC}}(\mathbf{z}_{0:N}) \\
    &\quad + \sum_{k=0}^{N-1} \boldsymbol{\lambda}_{k+1}^\top 
        \left( \mathbf{z}_{k+1} - \mathcal{M}_k(\mathbf{z}_k) \right).  \label{eq:q_lagrange}
\end{split}
\end{equation}
The adjoint equations follow from setting the first variation of the Lagrangian with respect to each state variable, $ \mathbf{z}_k $, equal to zero. 
We utilize the shorthand notation $Q_k := Q_k(\mathbf{z}_k)$ and $Q_k^{\mathrm{b}} := Q_k(\mathbf{z}_0^{\mathrm{b}})$ and define the observation and predictability residuals as $\mathbf{r}_k := \mathbf{y}_k - Q_k$ and $\mathbf{q}_k := Q_k - Q_k^{\mathrm{b}}$, respectively. 
We distinguish three cases in the derivation. For the terminal time $ k = N $, the state $ \mathbf{z}_N $ appears only in the data misfit and predictability terms, as well as in the constraint for $ \mathbf{z}_N $. Differentiating the Lagrangian with respect to $\mathbf{z}_N$ and setting it to zero yields 
\begin{equation}
    \boldsymbol{\lambda}_N = 
    (\nabla Q_N)^\top \left[
    \mathbf{R}_N^{-1} \mathbf{r}_N
    - \mathbf{L}_N^{-1} \mathbf{q}_N
    \right].
\end{equation}
For intermediate times $ 0 < k < N $, the variable $ \mathbf{z}_k $ contributes to the data misfit and predictability terms at time $ k $, and also appears in the model constraints for both $ \mathbf{z}_k $ and $ \mathbf{z}_{k+1} $. 
Differentiating the Lagrangian with respect to $ \mathbf{z}_k $ and setting it zero yields:
\begin{equation}
    \boldsymbol{\lambda}_k = (\nabla Q_k)^\top 
    \left[
    \mathbf{R}_k^{-1} \mathbf{r}_{k}
    - \mathbf{L}_k^{-1} \mathbf{q}_{k}
    \right]
    + (\nabla \mathcal{M}_k)^\top \boldsymbol{\lambda}_{k+1}.
\end{equation}
The initial state $ \mathbf{z}_0 $ appears in the background prior, the observation and predictability terms at time zero, and the constraint governing the next state. 
Differentiating the Lagrangian with respect to $\mathbf{z}_0$ and setting it to zero yields:

\begin{equation}
    \boldsymbol{\lambda}_0 = 
-\mathbf{B}^{-1} (\mathbf{z}_0 - \mathbf{z}_0^{\mathrm{b}}).
\end{equation}

\subsection{Weighted Mean Error QoI Map}
In the case where the cost function utilizes the WME map at each time $ t_k $, we note that this map aggregates scaled misfits across all observations into a single quantity of interest. To derive the gradient of $ \mathcal{J}_{\mathrm{DC}} $, we once again introduce Lagrange multipliers and the gradient is computed using a forward–backward pass. In the forward pass, we initialize $ \mathbf{z}_0 $ and propagate the system forward using the model dynamics where the corresponding WME map $ Q_{\mathrm{wme}}(\mathbf{z}_0) $ is evaluated for each $k=0,\ldots, N-1$. 
The backward pass then computes the adjoint variables $ \boldsymbol{\lambda}_k $ using a discrete adjoint recursion. The recursion is initialized with $ \boldsymbol{\lambda}_{N+1} := 0 $.  For each time step $ k = N, N-1, \dots, 0 $, we compute the Jacobian of the WME map given by~\eqref{eq:wme_qoi_jacobian}:
\begin{equation}
    \mathbf{J}_k := D_{\mathbf{z}_k} Q_{\mathrm{wme}}(\mathbf{z}_0) = \frac{1}{\sqrt{N}} \mathbf{R}^{-\frac{1}{2}}_{k}\mathcal{H}_{k}
\end{equation} which is constant in $ \mathbf{z}_0 $ since the WME map is affine in the state variable. 
The local contribution to the gradient is given by
\begin{equation}
\nabla_{\mathbf{z}_k} \mathcal{J}_k =
      \mathbf{J}_k^\top \left[
         Q_{\mathrm{wme}}\left(\mathbf{z}_0\right) - \mathbf{L}_{\mathrm{wme}}^{-1} \left(Q_{\mathrm{wme}}\left(\mathbf{z}_0 -\mathbf{z}_0^{\mathrm{b}}\right)\right)
        \right]
\end{equation}
which captures the difference between the unweighted WME term and the predictability penalty. 
The adjoint variables are then updated recursively as
\begin{equation}
    \boldsymbol{\lambda}_k = 
    D_{\mathbf{z}_k} \mathcal{M}_k^\top \boldsymbol{\lambda}_{k+1}
    + \nabla_{\mathbf{z}_k} \mathcal{J}_k,
\end{equation}
proceeding backward in time from $ k = N $ to $ k = 0 $. 
Finally, the last adjoint variable is computed as 
\begin{equation}
    \boldsymbol{\lambda}_0 = -\mathbf{B}_0^{-1} (\mathbf{z}_0 - \mathbf{z}_0^{\mathrm{b}}).
\end{equation}
\section{Computational Considerations}\label{sec:comp_considerations}
\subsection{The Predictability Assumption}\label{subsec:pred_cov_inflate}

Consider again~\eqref{eq:wme_pred_cov}, and let the background (initial) covariance matrix be $ \mathbf{B} = \sigma_b^2 \mathbf{I} \in \mathbb{R}^{n \times n} $, which encodes an isotropic structure with variance $ \sigma_b^2 $. Similarly, the observation covariance matrix is defined as $ \mathbf{R}_k = \sigma_{\text{obs}}^2 \mathbf{I} \in \mathbb{R}^{d \times d} $, also assuming isotropy with variance $ \sigma_{\text{obs}}^2 $. 
Finally, the QoI at time $ k $ is given by the composition QoI map $ \mathbf{Q}_k = \mathcal{H}_k \circ\mathcal{M}_{k:0} \in \mathbb{R}^{d \times n} $, where $ \mathcal{H}_k $ is the linear or nonlinear observation matrix and $ \mathcal{M}_k $ is the model operator. Then, the minimum eigenvalue of $ \mathbf{L}_k $ is given by
\begin{equation}
    \lambda_{\min}(\mathbf{L}_k) = \frac{N   \sigma_b^2}{\sigma_{\text{obs}}^2}   \lambda_{\min}(\mathbf{Q}_k \mathbf{Q}_k^\top).
\end{equation}
We again recall that \cite{Pilosov2023} shows that for sufficiently large $N$:
\begin{equation}
    \lambda_{\min}(\mathbf{L}_k) \geq  \lambda_{\max}(\mathbf{R}_k) =  \sigma_{\text{obs}}^2.
\end{equation}
While this guarantees that the predictability assumption holds, it is often far too conservative in practice since it is generally unnecessary to ensure that the slowest direction of asymptotic decay of the observed distribution is faster than the asymptotic decay of the predicted distribution in any direction.
Subsequently, we introduce the positive scaling factor $\gamma$, typically chosen much less than $1$, so that
\begin{equation}
    \lambda_{\min}(\mathbf{L}_k) \geq   \gamma \cdot\lambda_{\max}(\mathbf{R}_k) =   \gamma \cdot \sigma_{\text{obs}}^2.
\end{equation}
Then, substituting this expression for $ \lambda_{\min}(\mathbf{L}_k) $, and multiplying both sides by $ \sigma_{\text{obs}}^2 $, we get:
\begin{equation}
N   \sigma_b^2   \lambda_{\min}(\mathbf{Q}_k \mathbf{Q}_k^\top) \geq \gamma \cdot   \sigma_{\text{obs}}^4
\end{equation}
Solving for $ \sigma_b^2 $, we obtain:
\begin{equation}
\sigma_b^2 \geq \frac{\gamma  \cdot \sigma_{\text{obs}}^4}{N   \lambda_{\min}(\mathbf{Q}_k \mathbf{Q}_k^\top)}. \label{eq:eig_low_bound}
\end{equation}
This ensures the background variance is sufficiently large to avoid overconfidence in data directions that are weakly informed by the dynamics while still maintaining tighter variances in the prediction.
\subsection{Computational Cost}\label{subsec:comp_cost}
In practice, the dominant computational cost is typically model propagation, especially in high-dimensional systems such as those arising from PDE-based models. 
Consequently, evaluating any of the cost functions is often comparable to the cost of a single forward model run, supplemented by additional linear algebra operations to compute the misfit terms. 

The DC 4D-Var formulation introduces a modest increase in the number of observation operator evaluations compared to the traditional 4D-Var approach due to the inclusion of the additional predictability term in the cost function. 
In the DC-WME 4D-Var case, the dominant computational cost arises from the repeated model evaluations required by the WME QoI map as shown below.

Let $n$ denote the dimension of the state space, and let $m_k$ represent the dimension of the observation at time $t_k$. The model is propagated forward over $N$ time steps (from $t_0$ to $t_N$). 
Let $\mathcal{I} = \{k_1, \ldots, k_K\} \subseteq \{0, \ldots, N\}$ denote the set of observation time indices, with $K = |\mathcal{I}|$. 
The cost of one forward model step $\mathcal{M}_k$ is denoted $C_\mathcal{M}$, and the cost of evaluating the observation operator $\mathcal{H}_k$ is denoted $C_\mathcal{H}(n, m_k)$. The total cost of computing the forward model trajectory is 
\[
\mathcal{O}(N \cdot C_\mathcal{M}),
\]
while the total cost of computing the observations is
\[
\mathcal{O}\left( \sum_{k \in \mathcal{I}} C_\mathcal{H}(n, m_k) \right).
\]
Additional quadratic terms contribute to the total computational cost. 
The background term incurs a cost of $\mathcal{O}(n^2)$ if $\mathbf{B}^{-1}$ is dense. 
For the observation misfit terms, assuming that each $\mathbf{R}_k$ is diagonal or sparse, the cost per time step is $\mathcal{O}(m_k^2)$, leading to an overall cost of
\[
    \mathcal{O}\left( \sum_{k \in \mathcal{I}} m_k^2 \right).
\]
For the DC 4D-Var cost function, the predictability portion of the cost function creates an additional cost of
\[
    \mathcal{O}\left( \sum_{k \in \mathcal{I}} 2 \cdot C_{\mathcal{H}}(n, m_k) \right).
\]
Under the assumption that the covariance structure of $\mathbf{L}_k$ is similar to that of $\mathbf{R}_k$, the predictability misfit term adds an additional cost of
\[
    \mathcal{O}\left( \sum_{k \in \mathcal{I}} m_k^2 \right).
\]

Evaluating $Q_{\mathrm{wme}}(\mathbf{z}_0)$ involves two main computational tasks. 
First, it requires $N$ model propagations from the initial time $t_0$ to each time $t_k \in \mathcal{I}$. 
Second, it necessitates applying the observation operator at each of these times. 
As a result, the total cost of evaluating all $Q_{\mathrm{wme}}(\mathbf{z}_0)$ terms is
\[
    \mathcal{O}\left( K \cdot N \cdot C_{\mathcal{M}} \right) + \mathcal{O}\left( K \cdot N \cdot C_{\mathcal{H}}(n, m_k) \right).
\]

In addition to these map evaluations, the cost function includes norm computations. 
For each term of the form $\left\| Q_{\mathrm{wme}}(\mathbf{z}_0) \right\|^2$, the cost is $\mathcal{O}(m_k)$. 
For each predictability term involving the difference $Q_{\mathrm{wme}}(\mathbf{z}_0) - Q_{\mathrm{wme}}(\mathbf{z}_0^{\mathrm{b}})$, and assuming application of the inverse covariance $\mathbf{L}_k^{-1}$, the cost becomes $\mathcal{O}(m_k^2)$. 
Therefore, the total cost associated with all norm computations is
\[
    \mathcal{O}\left( \sum_{k \in \mathcal{I}} m_k^2 \right).
\]

To summarize, the standard 4D-Var method has a total computational cost of
\begin{equation*}
    \mathcal{O}(N \cdot C_{\mathcal{M}}) + \mathcal{O}\left( \sum\limits_{k \in \mathcal{I}} C_{\mathcal{H}}(n, m_k) \right) + \mathcal{O}(n^2)+ \mathcal{O}\left( \sum\limits_{k \in \mathcal{I}} m_k^2 \right),
\end{equation*}
the DC 4D-Var method has a total computational cost of
\begin{equation*}
    \mathcal{O}(N \cdot C_{\mathcal{M}}) + \mathcal{O}\left( \sum\limits_{k \in \mathcal{I}} 2 \cdot C_{\mathcal{H}}(n, m_k) \right)   + \mathcal{O}(n^2) + \mathcal{O}\left( \sum\limits_{k \in \mathcal{I}} m_k^2 \right),
\end{equation*}
and the DC-WME 4D-Var method has a total computational cost of
\begin{equation*}
\begin{split}
    &\mathcal{O}(K \cdot N \cdot C_{\mathcal{M}}) 
     + \mathcal{O}(K \cdot N \cdot C_{\mathcal{H}}(n, m_k)) \\
    &\quad + \mathcal{O}(n^2) 
     + \mathcal{O}\!\left( \sum\limits_{k \in \mathcal{I}} m_k^2 \right).
\end{split}
\end{equation*}
To summarize, the DC-WME 4D-Var formulation is more expensive than traditional 4D-Var since each evaluation of the cost function involves a nested summation over all observation times. However, this can be mitigated to some degree by employing certain strategies such as reusing stored model trajectories or implementing vectorized evaluations of the composite map $ \mathcal{M}_{k:0}(\mathbf{z}_0) $.
\section{Design of Numerical Experiments}

\subsection{Computational Implementation}
Data assimilation problems in meteorology and oceanography typically involve high-dimensional state spaces, often exceeding $10^7$ degrees of freedom. Thus, the practical implementation of any 4D-Var scheme relies critically on the rapid convergence of memory-efficient gradient-based algorithms for large-scale unconstrained minimization. 
In this setting, conjugate gradient and limited-memory quasi-Newton (LMQN) methods prove particularly effective, as they require storage of only a small number of vectors from recent iterations. 
Among LMQN methods, the Limited-memory Broyden–Fletcher–Goldfarb–Shanno (L-BFGS) algorithm consistently demonstrates strong performance and has become one of the most widely used approaches in operational 4D-Var implementations~\citep{Liu1989,Nash1991,Zou1993}.

For the two ODE examples shown in Section~\ref{sec:numerical_results}, the code was written using JAX~\citep{jax2018github} to leverage its built-in automatic differentiation capabilities within the solvers that allows us to utilize the Quasi-Newton L-BFGS method to perform all optimization routines across all three 4D-Var algorithms. 

Storm surge models typically rely on the Shallow Water Equations (SWE). 
For the SWE example shown in Section~\ref{sec:numerical_results}, we utilize SWEMniCSx~\citep{Dawson2024}, a new SWE software package, which is a Python-based finite element solver that utilizes the FEniCSx~\citep{Scroggs2022ty,Baratta2023,Alnaes2014,Scroggs2022} library for the 2D depth-integrated SWE. 
It provides a range of finite element methods. B
By using FEniCSx, novel numerical schemes are easily employed, including a fully implicit DG solver in time and a mixed DG-CG scheme. 
This flexibility makes SWEMniCSx an ideal forward modeling software for adjoint-based variational data assimilation algorithms.

It is worth noting that in operational practice, meteorological and oceanographic 4D-Var systems implement iterative solvers like L-BFGS or conjugate gradient within an inner loop, and often perform multiple outer loops to update the linearization, e.g., see~\citep{Navon1987,Li1993}. 
Although rigorous global convergence guarantees remain out of reach in the nonlinear case, using a good initial guess, typically a short range forecast, combined with the physical constraints imposed by the model, is often sufficient to steer the algorithm toward a physically meaningful local minimum, e.g., see~\citep{Mahfouf2000, Rabier2000, Janiskova1999}. 

\subsection{Constructing an Observation Model}  

As is common in data assimilation studies, we conduct identical twin experiments by generating the sequence of true hidden states $ \mathbf{z}_{0}^{\mathrm{true}} $ and synthetic observations $ \mathbf{y}^{\mathrm{obs}}_k \sim \mathcal{N}\left(\mathcal{H}_k\mathbf{z}_{0}^{\mathrm{true}}, \sigma_{\mathrm{obs}} \mathbf{I}_{d}\right) $ using the same forecast and observation models.

\subsection{Background Covariance Inflation and Predictability}\label{subsubsec:inflate_bound} 

Data assimilation algorithms often apply covariance inflation  (~\cite{Anderson1999, Anderson2007, Anderson2009, Bocquet2012, Miyoshi2011, Luo2013}) to counteract the excessive variance reduction caused by spurious correlations during the update. 
Motivated by this, we describe the background covariance matrix using the relationship
\begin{equation}
    \mathbf{B} = \alpha * \mathbf{I}    \label{eq:background_cov_inflate}
\end{equation}
where $\alpha$ is an inflation tuning parameter that is determined heuristically. 
Recalling Section~\ref{sec:Methodology}~\ref{subsec:pred_cov_inflate}, $\alpha$ may be interpreted as the covariance inflation parameter required for the predictability assumption to be satisfied. 
Specifically, from~\eqref{eq:eig_low_bound}, it is clear that the parameter $ \gamma $ plays a critical role in controlling the minimum required background variance $ \sigma_b^2 $. 
Conceptually, $ \gamma $ serves as a safety factor that prevents the assimilation process from becoming overconfident in directions where the model dynamics provide limited information. 
In situations where the predictability precision operator $ \mathbf{L}_k $ is unavailable or undefined, $ \gamma $ cannot be computed directly from theory. 
Instead, it is chosen based on different heuristics. 
Choosing a small value such as $\gamma = 0.01$ or $\gamma = 0.1$ will generally ensure that the background covariance remains sufficiently large, even in directions where the observation operator is highly informative while mitigating overfitting and numerical instability in poorly constrained directions. 
Alternatively, numerical tuning treats $\gamma$ as a tunable hyperparameter: multiple candidate values are tested, and the corresponding assimilation performance (e.g., RMSE) is evaluated to identify a value that balances model trust with observational influence. 
Yet another heuristic is based on inverse calibration where a target or assumed value of $\sigma_b^2$ is known so that one can estimate $\gamma$ by rearranging the inequality~\eqref{eq:eig_low_bound} and solving for it using the observed eigenvalue of the Gram matrix associated with the quantity of interest (QoI) map. This provides a data-driven method for calibrating the lower bound. 
Ultimately, the choice of $ \gamma $ reflects a modeling judgment about the relative confidence in observational data versus the information content and uncertainty structure of the model dynamics and observations.
\section{Numerical Results}\label{sec:numerical_results}
Here, we present three examples. The first involves a state-of-the-art simulation model for the shallow water equations (SWE) on a domain modeling a sloped beach. The purpose of that example is two-fold. First, the complexity of the computational model serves as a sufficiently complex test of the correct implementation of the adjoint-based methods. The tidal forcing and structure of the domain leads to periodic and smooth behavior for which any reasonable method is expected to give similar results. This leads to the second purpose, which is a ``sanity-check'' that all three methods are correctly implemented and produce comparable results for a variety of observational networks.
The second and third examples utilize benchmark systems (Lorenz-63 and Lorenz-96) to highlight differences in the performance of the methods when the dynamical systems exhibit more chaotic behavior. 
\begin{figure*}[t]
\centerline{\includegraphics[width=\textwidth]{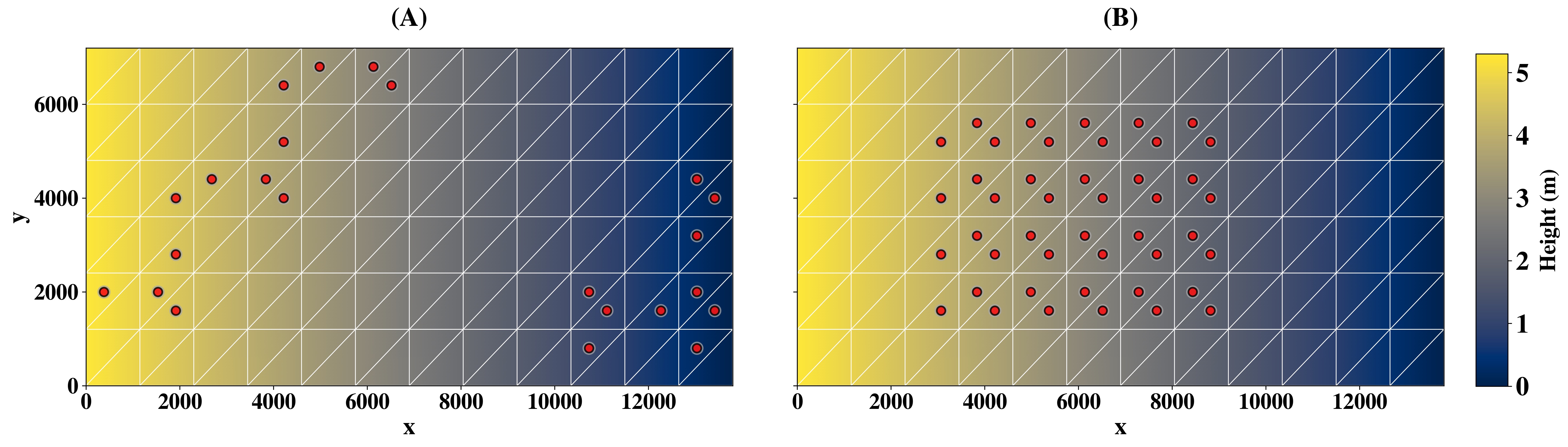}}
\caption{Spatial configurations (A) and (B) of observation stations (shown as red dots) used in the SWE experiments. Stations are distributed along transects perpendicular to the shoreline within a two-dimensional domain discretized into 144 triangular elements. These configurations are used as representative examples to demonstrate that all three assimilation methods perform equivalently in a well-conditioned, periodic setting, thereby serving as a baseline verification of correct implementation.}
\label{fig:station_locs}
\end{figure*}
\subsection{Shallow Water Equations (SWE)}\label{sec:swe}
The two-dimensional depth-integrated SWE, given by
 \begin{equation}
        \partial_t \mathbf{Q} + div(\mathbf{F(\mathbf{U})}) + \mathbf{f(\mathbf{U})} = 0,   \label{eqn:conservative_swe2}
\end{equation}
are widely used in meteorology and oceanography to test new algorithms, as they capture most of the physical DoF found in more advanced three-dimensional primitive equation models. 
Here, $\mathbf{Q} = (H, uH, vH)^\top$ is the vector of conserved variables and express the primitive variables as the vector $ \mathbf{U} = (H, u, v)^\top $ where
$H,u,$ and $v$ denote, respectively, water depth, depth-averaged velocity in the x-coordinate, and depth averaged velocity in the y-coordinate.
Additionally, we define a tensor for the flux $ \mathbf{F}(\mathbf{U})$,
 \begin{equation}
    \begin{pmatrix}
        Hu & Hv \\
        Hu^2 + \frac{1}{2}g(H^2 -h_b^2) & Huv \\
        Huv & Hv^2 + \frac{1}{2}g(H^2 - h_b^2)  \label{eqn:conservative_flux1}
    \end{pmatrix}\notag
\end{equation}
and forcing vector $\mathbf{f(\mathbf{U})}$:
\begin{equation}
    \begin{pmatrix}
        0 \\
        -g\zeta\frac{\partial h_B}{\partial x} + \tau_B u H - fvH -C_D\frac{\rho_{\text{air}}}{\rho_{water}}\|\mathbf{w}\|w_x +  \frac{H}{\rho_\text{water}} \frac{\partial P}{\partial x} \\
        -g\zeta\frac{\partial h_B}{\partial y} + \tau_B v H + fuH - C_D\frac{\rho_{\text{air}}}{\rho_{\text{water}}}\|\mathbf{w}\|w_y  + \frac{H}{\rho_\text{water}} \frac{\partial P}{\partial y}  \label{eqn:conservative_force1}
    \end{pmatrix}.\notag
\end{equation}
The water surface elevation relative to a fixed geoid is $ \zeta = H - h_b $, where $ h_b $ represents the bathymetry of the land surface, which remains constant over time. 
The parameter $ g $ denotes the gravitational acceleration constant, $ \tau_B $ represents the bottom friction factor, and $ f $ is the Coriolis parameter. 
The water surface drag coefficient is $ C_D $, while $ \rho_{\text{water}} $ and $ \rho_{\text{air}} $ correspond to the reference densities of water and air, respectively. 
The wind velocity vector is $ \mathbf{w} = (w_x, w_y)^\top $, and the atmospheric pressure gradients are given by $ \frac{\partial P}{\partial x} $ and $ \frac{\partial P}{\partial y} $. The explicit expressions for the bottom friction components are $ \frac{\tau_{b_x}}{\rho_0}=\frac{K_{\text{slip}} Q_x}{H} $ and $ \frac{\tau_{b_y}}{\rho_0}=\frac{K_{\text{slip}} Q_y}{H} $. 
The coefficient $ K_{\text{slip}}=c_f|\mathbf{u}| $ follows a quadratic drag law, where
\begin{equation}
c_f=\frac{g n^2}{H^{1 / 3}}     \label{eq:mann_fric}
\end{equation}
depends on Manning’s $ n $ coefficient. 
Since Manning’s $ n $ values vary spatially, they are defined nodewise within the discretized physical domain, forming a piecewise linear representation of the continuous bottom friction field.

We adapt a benchmark test case originally introduced by~\cite{Balzano1998} and later refined in~\cite{Karna2011, Dawson2024}. 
The computational domain is a two-dimensional rectangular basin measuring $13{,}800\,\mathrm{m}$ in length and $7,200,\mathrm{m}$ in width. 
The bathymetry is defined as a uniformly sloping beach, decreasing linearly from $5\ \mathrm{m}$ depth at the offshore boundary to sea level at the shoreline. 
Bottom friction follows Manning’s formulation with a uniform coefficient of $0.02 \ \mathrm{s/m}^{1/3}$. 
While the original benchmark was formulated as a one-dimensional problem, we extend it here to two spatial dimensions to capture realistic cross-shore variability. 
The domain is discretized into 144 uniform triangular elements, each approximately $1,150\ \mathrm{m} \times 1,200\ \mathrm{m}$ in size, as shown in Fig.~\ref{fig:station_locs}, and a Discontinuous Galerkin (DG) finite element method is adapted to handle wetting and drying phenomena following~\cite{Dawson2024}. The initial conditions consist of a flat free surface aligned with the geoid, $\zeta(x, y, 0) = 0\ \mathrm{m}$, and a stationary velocity field: $u(x, y, 0) = v(x, y, 0) = 0\ \mathrm{m/s}$.
At the left boundary ($x = 0$), we impose a time-harmonic open boundary condition with amplitude $2\ \mathrm{m}$ and period 12 hours. 
\begin{table*}
\centering
\begin{tabular}{l|>{\columncolor{gray!10}}c>{\columncolor{gray!10}}c|>{\columncolor{blue!5}}c>{\columncolor{blue!5}}c|>{\columncolor{green!10}}c>{\columncolor{green!10}}c}
\toprule
 & \multicolumn{2}{c|}{\textbf{4D-Var}} & \multicolumn{2}{c|}{\textbf{DC-4DVar}} & \multicolumn{2}{c}{\textbf{DC-WME 4D-Var}} \\
 \cmidrule(lr){2-3} \cmidrule(lr){4-5} \cmidrule(l){6-7}
 & \textbf{RMSE} & \textbf{Data Misfit} & \textbf{RMSE} & \textbf{Data Misfit} & \textbf{RMSE} & \textbf{Data Misfit} \\
\midrule
\textbf{Day 1} & 0.1262 & 0.0397 & 0.1262 & 0.0398 & 0.1262 & 0.0397 \\
\textbf{Day 2} & 0.1564 & 0.0472 & 0.1564 & 0.0472 & 0.1564 & 0.0472 \\
\textbf{Day 3} & 0.1618 & 0.0496 & 0.1617 & 0.0496 & 0.1617 & 0.0495 \\
\textbf{Day 4} & 0.1625 & 0.0499 & 0.1625 & 0.0499 & 0.1624 & 0.0499 \\
\textbf{Day 5} & 0.1627 & 0.0500 & 0.1626 & 0.0500 & 0.1625 & 0.0500 \\
\textbf{Day 6} & 0.1626 & 0.0499 & 0.1626 & 0.0499 & 0.1625 & 0.0499 \\
\textbf{Day 7} & 0.1647 & 0.0596 & 0.1647 & 0.0596 & 0.1647 & 0.0596 \\
\midrule
\textbf{Total}  & 0.2528 & 0.1100 & 0.2528 & 0.1100 & 0.2528 & 0.1100 \\
\bottomrule
\end{tabular}
\caption{Daily and total values of RMSE and data misfit for configurations (A) of the SWE test case under standard 4D-Var, DC 4D-Var, and DC-WME 4D-Var. Results confirm that all three formulations yield nearly identical performance across the 7-day assimilation window. Configuration (B) produced similar results and is omitted for brevity. These outcomes serve as a consistency check, verifying that each method reproduces the expected behavior in a predictable setting.}
\label{tab:error_results}
\end{table*}

\begin{figure*}[t]
\centerline{\includegraphics[width=\textwidth]{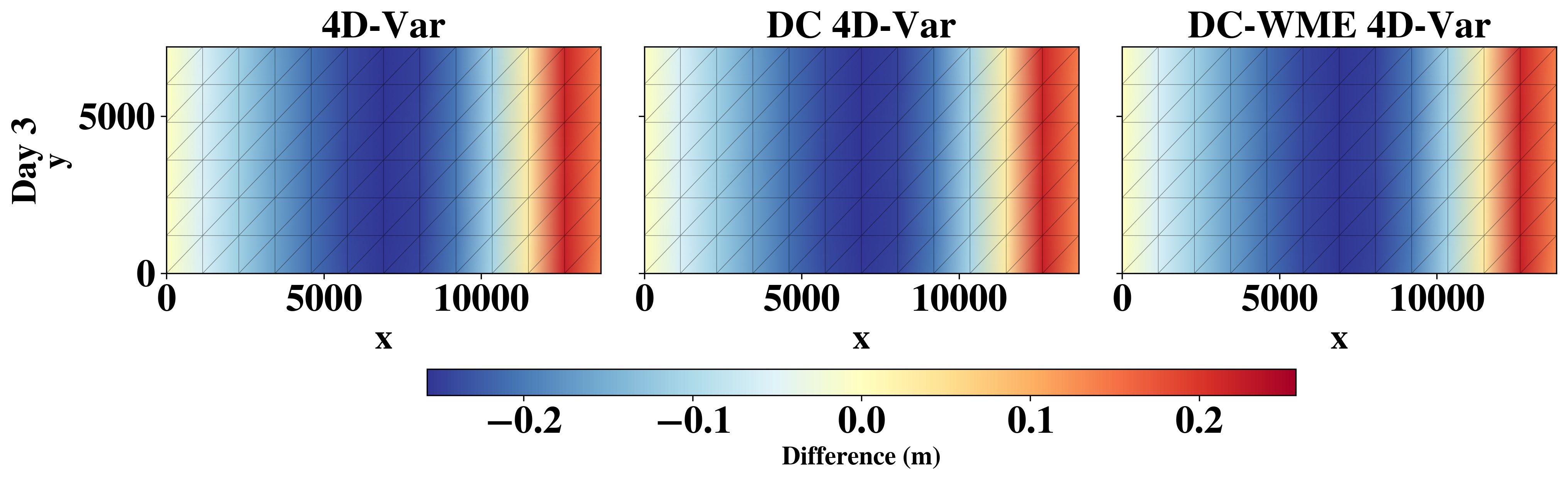}}
\caption{Reconstruction errors in surface elevation on Day 3 of the SWE experiment for configurations (A) and (B). Each column corresponds to one of the three assimilation methods: standard 4D-Var, DC 4D-Var, and DC-WME 4D-Var. Error magnitudes and spatial patterns are nearly indistinguishable across methods, confirming that all implementations behave equivalently in this smooth, periodic regime. This representative snapshot serves as a baseline “sanity check” before applying the methods to more sensitive dynamical systems.}
\label{fig:beach_results}
\end{figure*} 
Wall boundary conditions are enforced on all remaining sides. Simulations are run for a total duration of 7 days, using the SWEMniCS model with a temporal resolution of 600\,s. Synthetic surface elevation observations are generated at 23 spatial locations in configuration (A) and 44 locations in configuration (B), as shown in Fig.~\ref{fig:station_locs}. Observations are assimilated at hourly intervals, each perturbed with independent Gaussian noise of standard deviation $\sigma\_{\mathrm{obs}} = 1.5$, while the background error covariance is modeled as diagonal with standard deviation $\sigma\_{\mathrm{b}} = 4.0$. To avoid inverse crimes, the bathymetry and bottom friction formulations used in the simulation and in the assimilation model differ, as summarized in Table~\ref{tab:sim_params}, ensuring that the task reflects a realistic model–data mismatch.
\begin{table}
\centering
\renewcommand{\arraystretch}{1.4}
\small
\begin{tabular}{@{} l l l @{}}
\toprule
& \textbf{Data Assimilation} & \textbf{Truth} \\
\midrule
\textbf{Bottom Friction} 
& Manning's N & Linear \\
\textbf{Bathymetry (m)} 
& 5 & 5.3  \\
\bottomrule
\end{tabular}
\caption{Summary of key differences used to avoid inverse crimes between the simulation used to generate the synthetic ground truth and the model configuration used for data assimilation.}
\label{tab:sim_params}
\end{table}
Across all configurations, the three methods (standard 4D-Var, DC 4D-Var, and DC-WME 4D-Var) exhibit stable and comparable performance. Daily error fields and aggregate statistics remain consistent across methods, as reflected in Table~\ref{tab:error_results} and Fig.~\ref{fig:beach_results}. Presenting both configurations highlights that the results are insensitive to sensor density: in either case, all three formulations yield nearly identical RMSE and data misfit values. Configuration (A) serves as a baseline where the forcing and boundary conditions produce smooth, periodic dynamics on a rectangular domain. In this setting, all methods recover the true state with consistent accuracy, confirming the correctness of the adjoint implementations and optimization routines on a computationally demanding yet predictably behaved model. Configuration (B), with its denser sensor network, further reinforces these conclusions, showing no meaningful deviation in performance across methods despite the increased observational information.

These SWE experiments serve as a verification step. They confirm that the DC-based 4D-Var formulations function as intended, scale effectively to PDE-based models with nonlinear wetting and drying, and reproduce results that agree with standard 4D-Var when the underlying dynamics favor smooth and well-constrained state estimation.

\subsection{The Lorenz 63 System}
The Lorenz (1963) model~\cite{Lorenz1963} (referred to as ''the Lorenz-63 model'') consists of a coupled system of three nonlinear ordinary differential equations:
\begin{equation}
\frac{\partial z_{1}}{\partial t}=\sigma(z_{2}-z_{1}), \frac{\partial z_{2}}{\partial t}=\rho z_{1}-z_{2}-z_{1} z_{3}, \frac{\partial z_{3}}{\partial t}=z_{1} z_{2}-\beta z_{3}.\label{eq:lorenz63}
\end{equation}
The dependent variables are $ z_{1}(t) $, $ z_{2}(t) $, and $ z_{3}(t)$, with common parameter values set to $ \sigma = 10 $, $ \rho = 28 $, and $ \beta = 8/3 $. 
The initial state $ \mathbf{z}_{0} $, which serves as the first background state vector $ \mathbf{z}_0^b $, is drawn from a standard Gaussian prior distribution $ \pi_{\mathrm{init}} \left(\mathbf{z}_{0}, \mathbf{B} \right) $ with a mean of $ \mathbf{z}_{0}$. 
\subsubsection{Estimating the Background Variance Bound}
We first use this nonlinear model and its subsequent chaotic trajectories to demonstrate how one can estimate a lower bound on the background variance $ \sigma_b^2 $ required for consistent and stable data assimilation. These results provide empirical support for inflating the background variance in chaotic systems, particularly under sparse or partial observational regimes.

We proceed as follows. First, we consider the theoretical inequality~\eqref{eq:eig_low_bound}, where $ \mathbf{Q}_k = \mathcal{H}_k \circ \mathcal{M}_k $ denotes the composition of the nonlinear model propagator $ \mathcal{M}_k \in \mathbb{R}^{3 \times 3} $ and a fixed linear observation operator $ \mathcal{H}_k \in \mathbb{R}^{2 \times 3} $ that observes the first two components of the state. 
To evaluate this inequality in practice, we propagate the tangent linear model $ \mathbf{M}_k $ of the Lorenz-63 dynamics over a fixed integration time (e.g., $ t = 10.0 $) from a set of randomly sampled initial conditions. 
At the final time of each trajectory, we compute the Jacobian of the flow map $ \mathcal{M}_k $, apply the observation operator $ \mathcal{H}_k $, and construct the matrix $ \mathbf{Q}_k $. 
We then compute the minimum eigenvalue $ \lambda_{\min} \left( \mathbf{Q}_k \mathbf{Q}_k^\top \right) $, which directly enters the lower bound on $ \sigma_b^2 $ via inequality~\eqref{eq:eig_low_bound}. 
We repeat this procedure over 50 independent realizations. For each trajectory, we compute the corresponding bound using fixed values of $ \gamma = 0.1 $, observation noise variance $ \sigma_{\mathrm{obs}}^2 = 4.0 $, and number of observations $ N = 5 $. The resulting distribution of estimated bounds quantifies how the required background variance varies across different regions of the state space. Below, we summarize both the individual estimates and the ensemble mean, which provides a representative lower bound for practical assimilation under the given model and observation configuration. 
\begin{figure}[h]
    \centerline{\includegraphics[width=\textwidth]{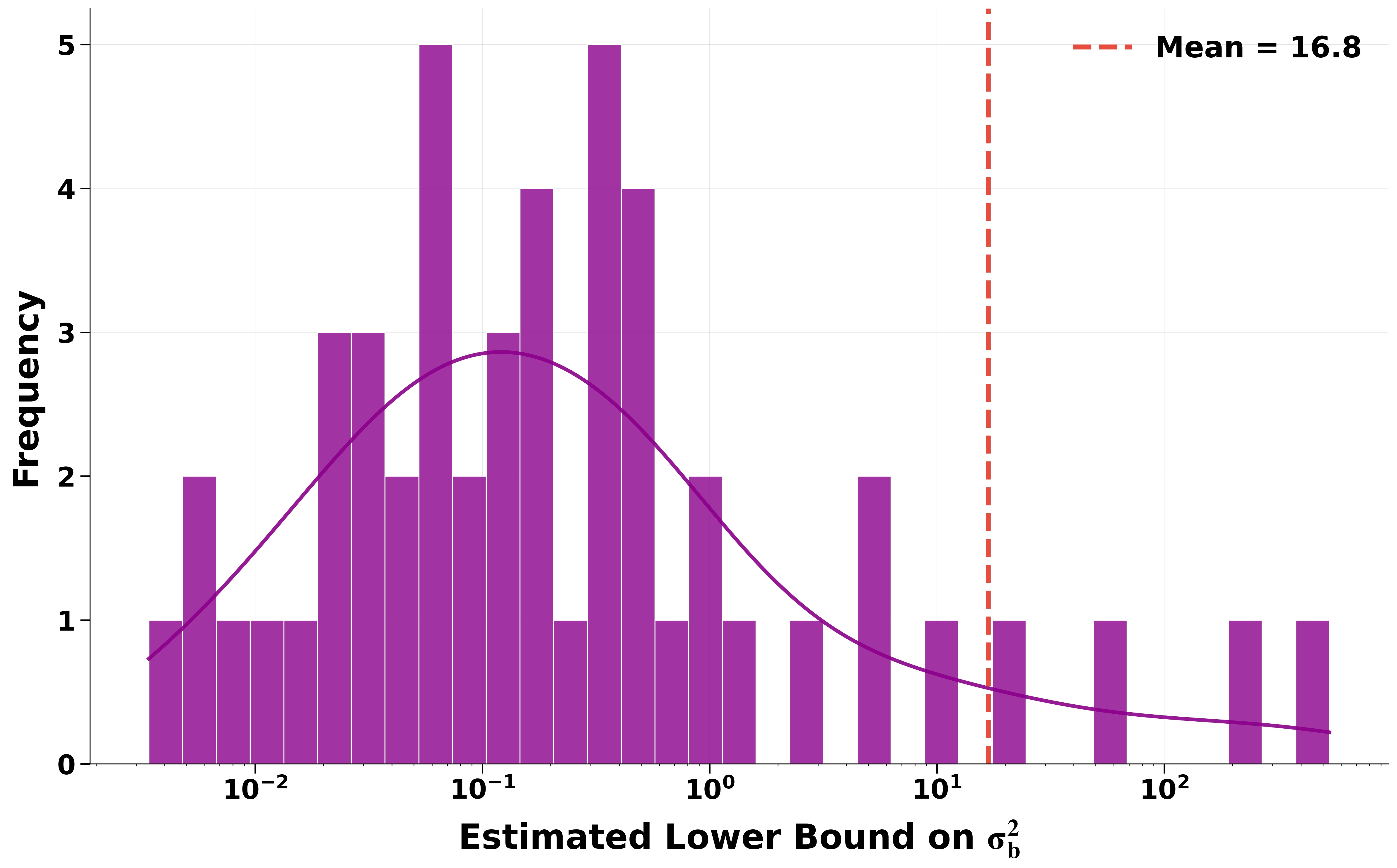}}
    \caption{Distribution of estimated lower bounds on the background variance $ \sigma_b^2 $ computed over 50 random trajectories of the Lorenz-63 system. For each trajectory, the bound is derived from the smallest eigenvalue of the forecast-observation Gram matrix, following the inequality~\eqref{eq:eig_low_bound}. The vertical dashed line denotes the average bound across all samples, which exceeds $ 4\sigma_{\mathrm{obs}}^2 $, confirming the need for background inflation to maintain consistent and stable assimilation in nonlinear regimes.}
    \label{fig:sigma_b_bounds}
\end{figure}
Fig.~\ref{fig:sigma_b_bounds} shows the distribution of estimated lower bounds for the background variance $ \sigma_b^2 $ across 50 random trajectories of the Lorenz 63 system. The mean value of the estimated bounds is approximately $\sigma_b^2 = 16.8$, as indicated by the vertical dashed line, which is over four times the observation noise variance $\sigma_{\mathrm{obs}}^2 = 4.0$. This observation supports the theoretical condition $ \sigma_b^2 \geq 4\sigma_{\mathrm{obs}}^2 $ derived from inequality~\eqref{eq:eig_low_bound}, and it confirms the importance of inflating background variance in systems with limited observability or weak dynamical sensitivity.
\begin{figure}[h]
    \centerline{\includegraphics[width=19pc]{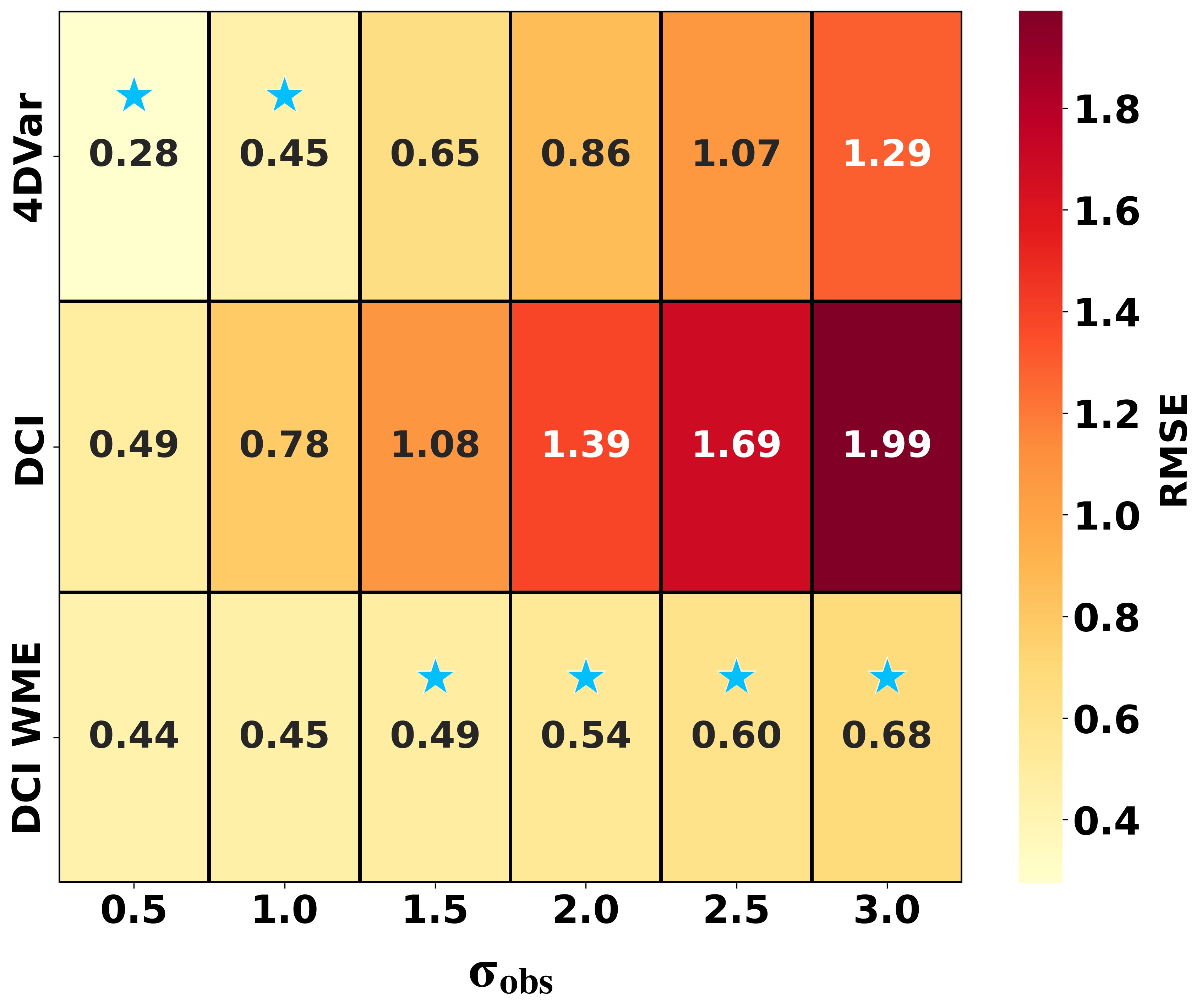}}
    \caption{Time-averaged RMSE for 4D-Var, DC 4D-Var, and DC-WME 4D-Var applied to the Lorenz 63 system under varying observation noise levels. Each method uses a fixed inflation level of $ \alpha = 4\sigma_{\mathrm{obs}}^2 $, corresponding to the theoretically motivated lower bound on background variance. Results demonstrate the robustness of DC-WME 4D-Var to increasing observation noise, with the best-performing method at each noise level indicated by a blue star.}
    \label{fig:l63_inflation}
\end{figure}
To validate the practical implications of this bound, we conduct a complementary experiment analyzing the performance of 4D-Var, DC 4D-Var, and DC-WME 4D-Var under inflation levels fixed at $ \alpha = 4\sigma_{\mathrm{obs}}^2 $ while increasing the observation noise $ \sigma_{\mathrm{obs}} \in \{0.5, 1.0, \dots, 3.0\} $.
Fig.~\ref{fig:l63_inflation} summarizes the results with the lowest-RMSE configuration at each noise level denoted with blue stars. 
These results clearly demonstrate the robustness of DC-WME 4D-Var across all noise levels. 
Unlike standard 4D-Var and DC 4D-Var, which exhibit rapidly increasing RMSE as noise grows, DC-WME 4D-Var maintains low error even under high-noise conditions. 
This advantage becomes especially pronounced when $ \sigma_{\mathrm{obs}} \geq 1.5 $, where DC-WME 4D-Var consistently outperforms the other methods. 

Taken together, the results shown in both Figs.~\ref{fig:sigma_b_bounds} and~\ref{fig:l63_inflation} demonstrate that properly inflating the background variance is not only necessary for theoretical consistency but also yields substantial practical gains in estimation accuracy. 
In particular, the DC-WME 4D-Var framework benefits significantly from predictable inflation strategies, using increased model uncertainty to reduce assimilation error in noisy, nonlinear regimes such as Lorenz 63.

\subsubsection{Time Averaged Root Mean Squared Error}

We now demonstrate that DC-WME 4D-Var yields the most accurate and stable estimates throughout the simulation of the Lorenz-63 model.
Figure~\ref{fig:ta_rmse_l63} shows the time series of root mean squared error (RMSE) computed over 1000 model time steps for the Lorenz-63 system of the 4D-Var, DC 4D-Var, and DC-WME 4D-Var methods. 
Each method assimilates observations over 50 assimilation cycles, and we compute the RMSE by comparing the analysis state to the true hidden state at each time step.

\begin{figure}[t]
    \centerline{\includegraphics[width=\textwidth]{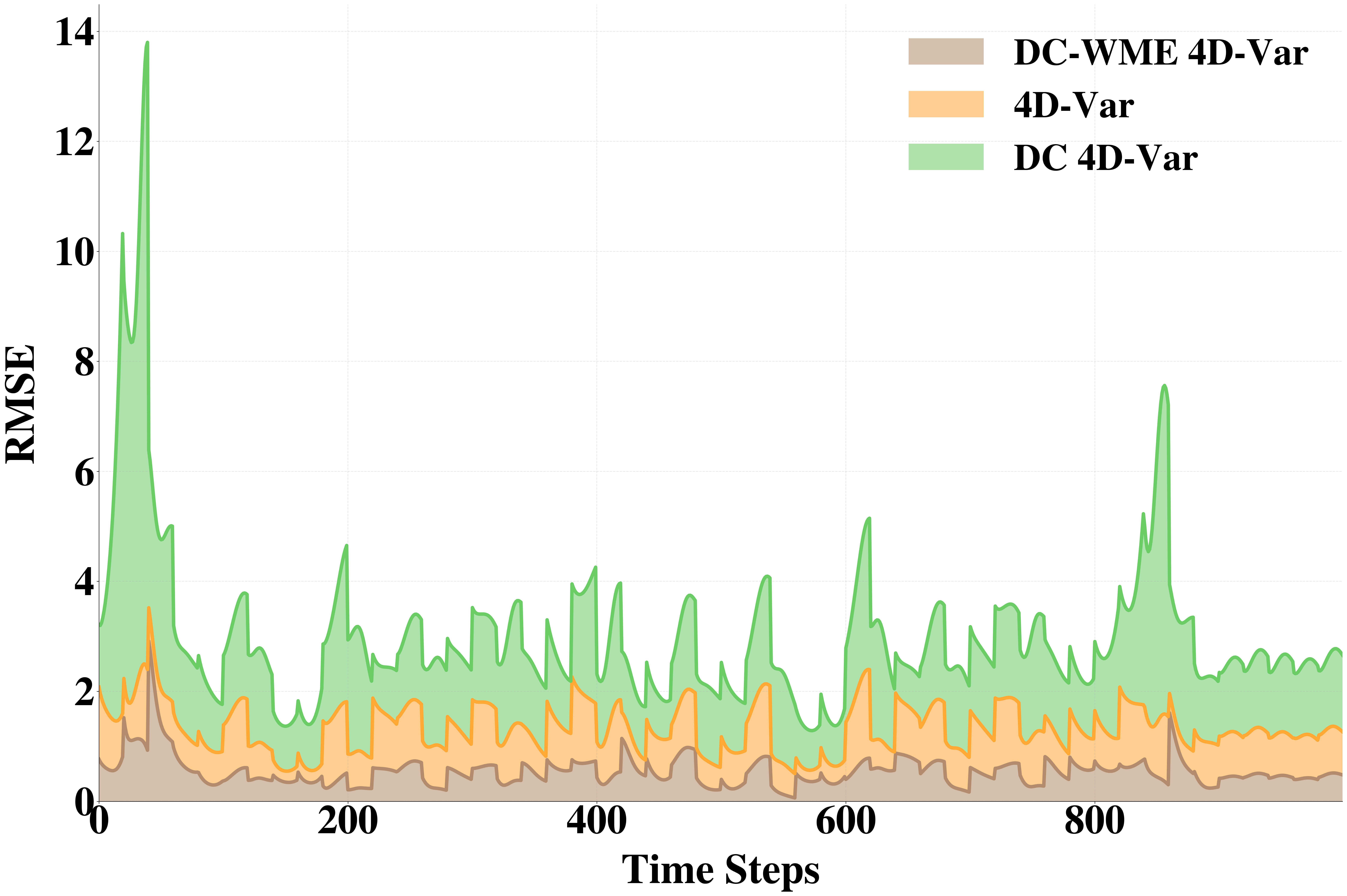}}
    \caption{Time-averaged Root Mean Squared Error (RMSE) over 1000 model time steps for 4D-Var, DC 4D-Var, and DC-WME 4D-Var applied to the Lorenz-63 system. RMSE is computed using the analysis state across 50 assimilation cycles. The results show that DC-WME 4D-Var consistently achieves the lowest error and exhibits greater stability over time, especially in the presence of nonlinear and chaotic dynamics.}
    \label{fig:ta_rmse_l63}
\end{figure}

Clearly, the DC-WME 4D-Var RMSE remains consistently low across the entire time horizon, with minimal variability. 
By contrast, standard 4D-Var produces higher average error and greater temporal fluctuations, while DC 4D-Var, though less accurate than standard 4D-Var, exhibits greater sensitivity to transient dynamics. 

The overall reduction in variation of the DC-WME 4D-Var RMSE is the consequence of computing the weighted mean of model predictions and observations through the WME QoI map in each assimilation window. 
By incorporating predictability-aware weighting into the assimilation process, DC-WME 4D-Var effectively suppresses the influence of poorly constrained directions and emphasizes consistency with the underlying dynamics. 
As a result, it achieves not only lower RMSE but also improved robustness over time, making it a compelling alternative to conventional variational assimilation approaches in nonlinear, low-dimensional systems.

\subsection{Lorenz-96}

We now consider the Lorenz-96 model~\cite{Lorenz1996}, which allows us to vary the degrees of freedom (DoF) and demonstrate the performance of the methods across multiple scenarios. 
Lorenz originally developed this system to model the temporal evolution of a generic scalar field, such as temperature, advected through a fluid within a cyclic spatial domain (e.g., along a constant latitude). 
The model represents this process using a spatially discretized set of coupled ordinary differential equations, given by
\begin{equation}
\frac{d \mathbf{z}_{k}}{d t}=\left(\mathbf{z}_{k+1}-\mathbf{z}_{k-2}\right) \mathbf{z}_{k-1}-\mathbf{z}_{k}+F, \quad k=1, \ldots, K.
\end{equation}
Let $ k $ denote one of $ K $ equally spaced grid points along the circular domain. 
Each state asymmetrically interacts with its neighboring states and follows periodic boundary conditions, where $ \mathbf{z}_{K+1} = \mathbf{z}_1 $, $ \mathbf{z}_{0} = \mathbf{z}_{k} $, and $ \mathbf{z}_{-1} = \mathbf{z}_{K-1} $. 
The indices increase in the positive direction and decrease in the negative direction. 
The nonlinear terms represent advection, the linear term captures dissipation, and $ F $ acts as a forcing term. In this study, we set $ F = 8 $, which generates chaotic dynamics.

\begin{figure}[t]
\centerline{\includegraphics[width=\textwidth]{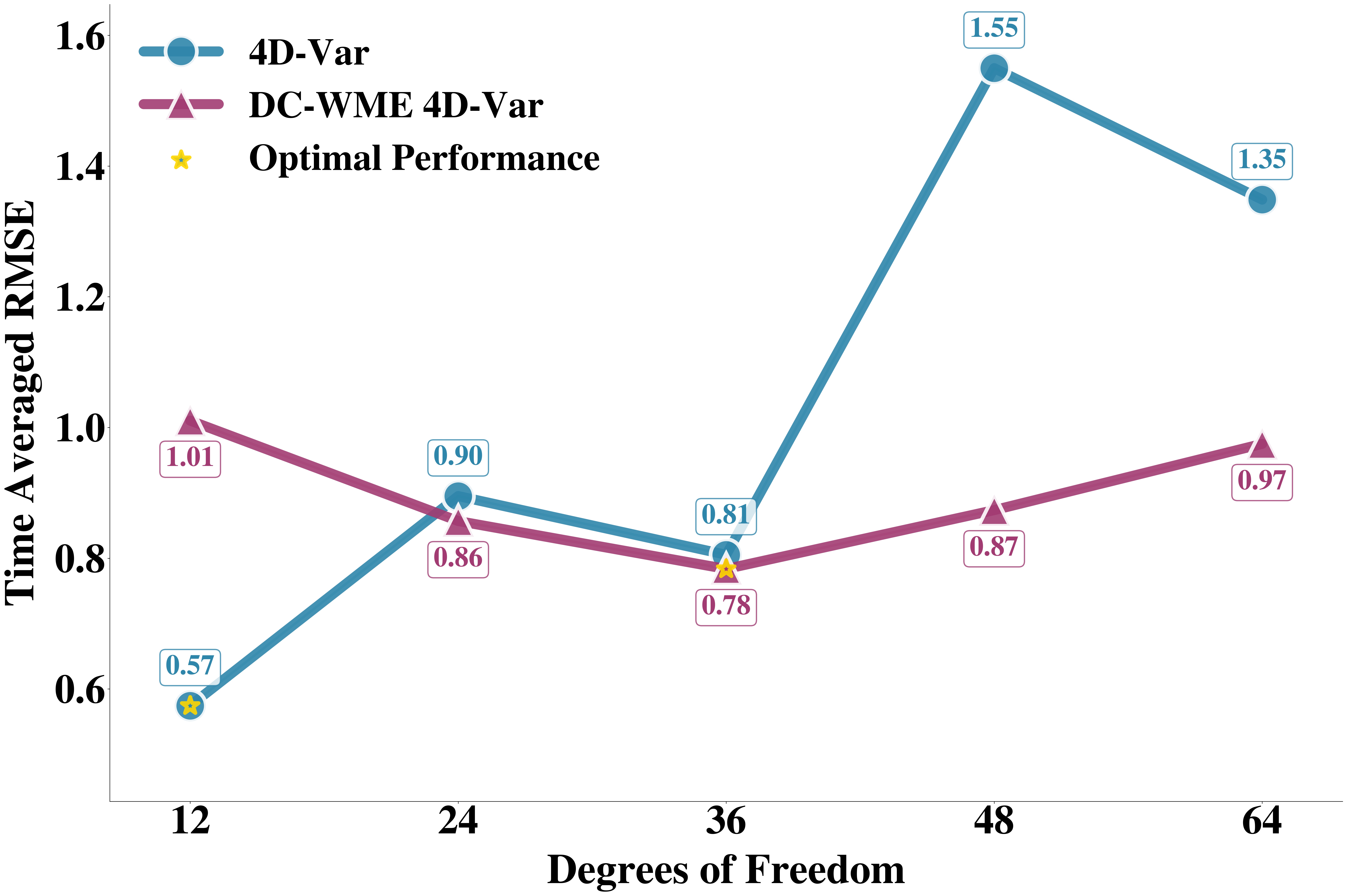}}
    \caption{Time-averaged Root Mean Squared Error (RMSE) for 4D-Var and DC-WME 4D-Var applied to the Lorenz 96 system across increasing numbers of degrees of freedom. As system dimensionality increases, standard 4D-Var suffers from growing estimation error, reaching an RMSE at $ K = 48 $. In contrast, DC-WME 4D-Var maintains low error throughout, achieving its optimal RMSE at $ K = 36 $ and remaining below 1.0 even at $ K = 64 $. These results demonstrate the superior scalability and stability of the DC-WME formulation in high-dimensional, partially observed chaotic systems.}
    \label{fig:rdvar_dofs}
\end{figure}

\subsubsection{Sensitivity to Degrees of Freedom}
We integrate the system over 500 time steps, perform data assimilation every 20 steps, and take partial observations every 4 steps on alternating state components. We fix the observational noise with standard deviation $ \sigma_{\text{obs}} = 1.2 $, and we scale the predictive uncertainty to ensure the inequality $ \sigma_{\text{pred}} \ge 4\sigma_{\text{obs}}^2 $ holds.
The time-averaged RMSE for both standard 4DVar and DC-WME 4DVar algorithms is shown in Fig.~\ref{fig:rdvar_dofs} as a function of DoF.
Across most all configurations, DC-WME 4D-Var consistently outperforms standard 4D-Var in terms of time-averaged RMSE with the performance gap increasing as the DoF grows. Standard 4D-Var achieves its best performance for low-dimensional systems (e.g., $ K = 12 $), which we mark as the optimal case. However, its accuracy deteriorates sharply beyond $ K = 36 $,  and remaining high at $ K = 64 $. This degradation reflects growing sensitivity to unobserved modes and accumulating forecast error in higher-dimensional regimes. The DC-WME 4D-Var, however, maintains robust performance across all DoFs. It achieves its best result at $K = 36$, which is also marked as optimal. 
As we increase the dimensionality, DC-WME 4D-Var experiences only a mild increase in RMSE, between $ K = 36 $ and $ K = 64 $. This resilience suggests that the DC-WME 4D-Var framework effectively balances information from both observed and unobserved components, mitigating the dimensional curse that undermines standard 4D-Var. 
Overall, these results demonstrate that DC-WME 4D-Var has the potential to improve scalability and deliver more consistent forecast skill as the complexity of the dynamical system increases. 
Its ability to maintain low RMSE even in high-dimensional regimes further highlights its potential for large-scale operational data assimilation applications.

\subsubsection{Forecasting and Model Bias}

The forecasting results of the various 4D-Var algorithms on the Lorenz-96 system over 1000 model time steps are presented in Fig.~\ref{fig:rmse_bias}. 
Let the bias time series associated with a given model be denoted by the vector
\begin{equation}
    \mathbf{b} = [b_1, b_2, \dots, b_T]^\top \in \mathbb{R}^{T},
\end{equation}
where $ b_t $ represents the signed bias at time step $ t $. To facilitate consistent visualization across models and time intervals, we normalize this series to the interval $ [-1, 1] $ using the infinity norm:
\begin{equation}
    \tilde{\mathbf{b}} = \frac{\mathbf{b}}{\|\mathbf{b}\|_{\infty}} = \left[ \frac{b_1}{\|\mathbf{b}\|_{\infty}}, \frac{b_2}{\|\mathbf{b}\|_{\infty}}, \dots, \frac{b_T}{\|\mathbf{b}\|_{\infty}} \right],
\end{equation}
where $\|\mathbf{b}\|_{\infty} := \max_{1 \leq t \leq T} |b_t|$ denotes the maximum absolute bias over time. This normalization preserves the relative structure of the bias signal while ensuring that its largest magnitude equals one. To retain interpretability of the original scale, the normalization factor $ \|\mathbf{b}\|_{\infty} $ is explicitly annotated within each subplot.
\begin{figure*}[h!]
\centerline{\includegraphics[width=\textwidth]{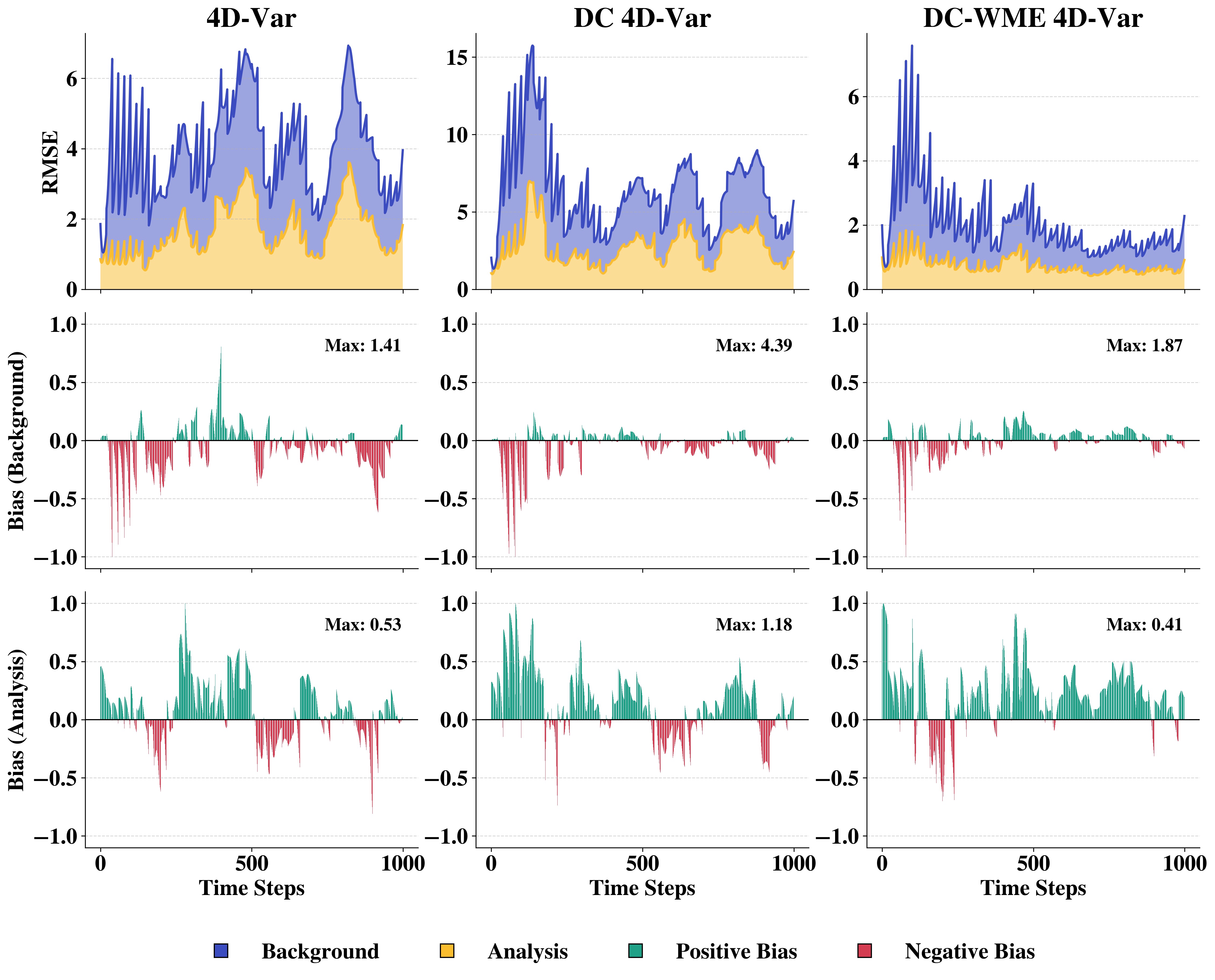}}
\caption{Forecasting results for 4D-Var, DC 4D-Var, and DC-WME 4D-Var applied to the Lorenz 96 system over 1000 model time steps. Each subfigure plots RMSE (top row) and normalized bias (middle and bottom rows) for the background (blue) and analysis (yellow) fields. Across all methods, analysis fields consistently reduce both RMSE and bias relative to their background counterparts. DC-WME 4D-Var achieves the lowest RMSE and bias magnitudes overall, with its analysis bias peaking at only 0.41 compared to 0.71 for DC 4D-Var and 0.95 for standard 4D-Var. These results underscore the effectiveness of the WME formulation in improving both accuracy and stability over time.}
\label{fig:rmse_bias} 
\end{figure*}

The plots in Fig.~\ref{fig:rmse_bias} highlight several key differences in forecast skill and error structure across the methods. 
The top row illustrates that DC-WME 4D-Var consistently achieves the lowest RMSE in both background and analysis fields throughout the assimilation window. 
The shaded area between the background and analysis curves remains smallest for DC-WME 4D-Var, indicating tighter alignment between prior estimates and the assimilated solution. 
In the standard 4D-Var and DC 4D-Var cases, the background RMSE exhibits periodic fluctuations and sharp peaks while DC-WME 4D-Var maintains a more stable and suppressed RMSE trajectory.
This suggests an improved robustness to transient model instabilities and enhanced temporal consistency is achieved by the DC-WME 4D-Var method. 
The normalized bias plots (middle and bottom rows) further illustrate the error-reducing effect of assimilation. 
For all methods, the analysis bias remains consistently closer to zero than the background bias.

However, DC-WME 4D-Var achieves the most dramatic bias reduction, with the lowest maximum bias magnitudes in the analysis fields ($ \max = 0.41 $). 
This result reflects the enhanced ability of the WME formulation to mitigate systematic over and underestimation over time. 
Additionally, DC-WME 4D-Var produces a more symmetric distribution of positive and negative biases, whereas standard 4D-Var and DC 4D-Var show more persistent negative bias trends in the background estimates. 
This symmetry in DC-WME 4D-Var likely reflects a more balanced correction mechanism enabled by its QoI-weighted structure.

\begin{figure*}[t]
\centerline{\includegraphics[width=\textwidth]{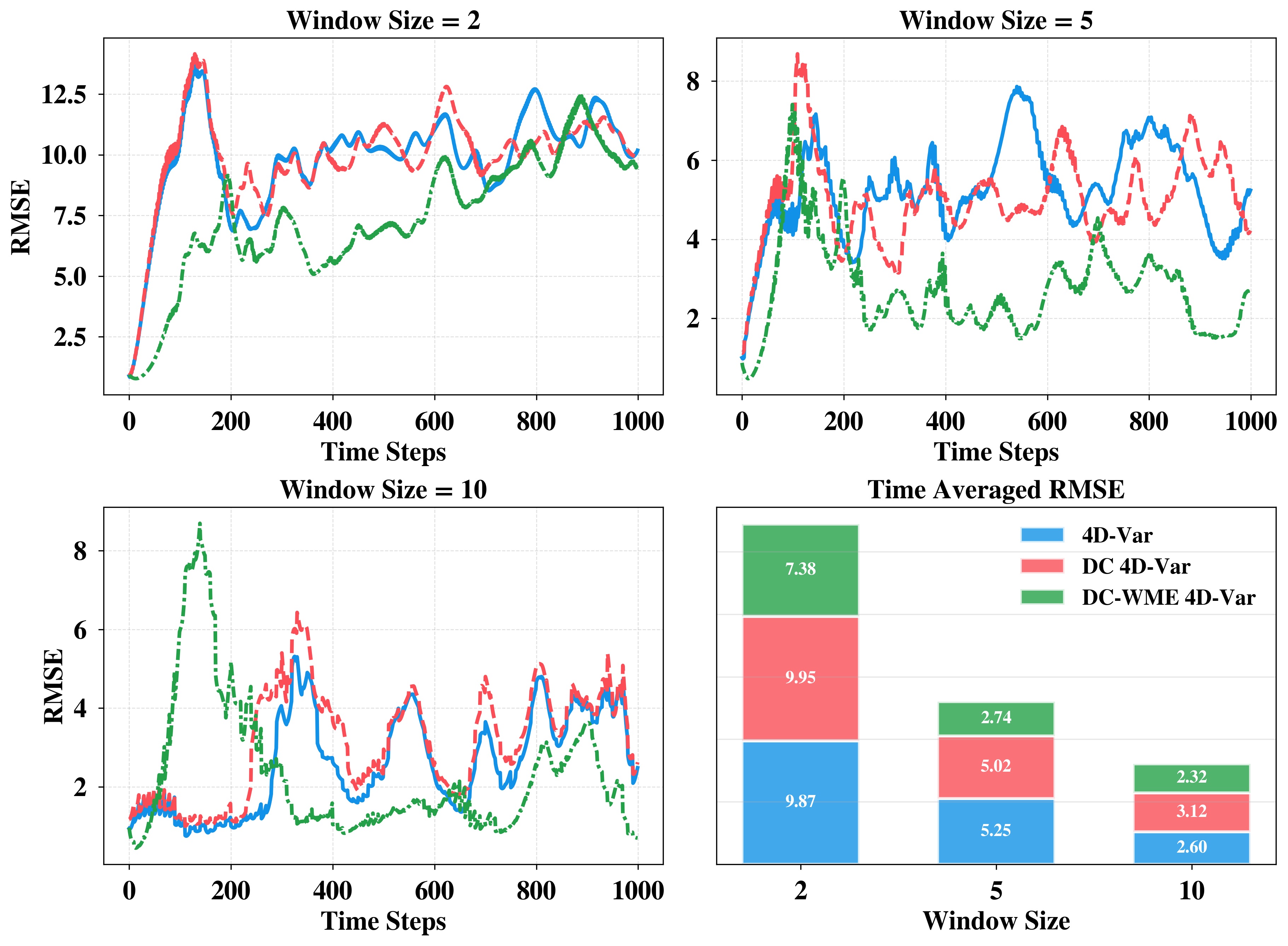}}
    \caption{Time series of Root Mean Squared Error (RMSE) for 4D-Var, DC 4D-Var, and DC-WME 4D-Var applied to the Lorenz 96 system under varying assimilation window lengths. Each subplot corresponds to a different window size: 2-step, 5-step, and 10-step. Across all configurations, DC-WME 4D-Var achieves the lowest RMSE, with the most pronounced improvement observed in the 2-step window case, where its average RMSE is 7.38 compared to 9.87 for standard 4D-Var. These results highlight the robustness of the DC-WME formulation under reduced observation intervals and limited temporal assimilation depth.}
    \label{fig:window_size_plot_l96}
\end{figure*}

\subsubsection{Effect of Window Size}

We now assess the impact of assimilating fewer observations by progressively shortening the assimilation window size to 10 steps, 5 steps, and 2 steps. 
We observe the Lorenz-96 system at every other DoF. 
The RMSE of the analysis fields shown in Fig.~\ref{fig:window_size_plot_l96} demonstrate the performance of each method for each window size. 
In all cases, DC-WME 4D-Var achieves the lowest RMSE, with the improvement most pronounced for shorter windows. 

These results indicate that the DC-WME 4D-Var formulation becomes substantially more robust when we assimilate observations more frequently but in smaller batches. For a window size of 5, DC-WME 4D-Var maintains superior performance, with a substantially lower average RMSE compared to both standard 4D-Var and DC 4D-Var. Even when the window size is extended to 10 steps, where traditional methods typically perform better due to a longer observational lookahead, DC-WME 4D-Var still yields the lowest RMSE. The stacked bar chart of time-averaged RMSEs clearly supports this conclusion, showing consistent and significant improvement of DC-WME 4D-Var across all window sizes considered. The RMSE trajectories of standard 4D-Var and DC 4D-Var exhibit larger peaks and greater variability where the DC-WME 4D-Var consistently produces lower and more stable RMSE values over time, particularly during high-error episodes. This stability underscores the benefit of incorporating predictability-based weighting into the cost function.  Taken together, these results demonstrate that DC-WME 4D-Var can both improve forecast accuracy while simultaneously enhancing resilience to the challenges posed by smaller assimilation windows, a scenario that mimics limited observational availability in real-world systems.
\begin{figure*}[t]
    \centerline{\includegraphics[width=\textwidth]{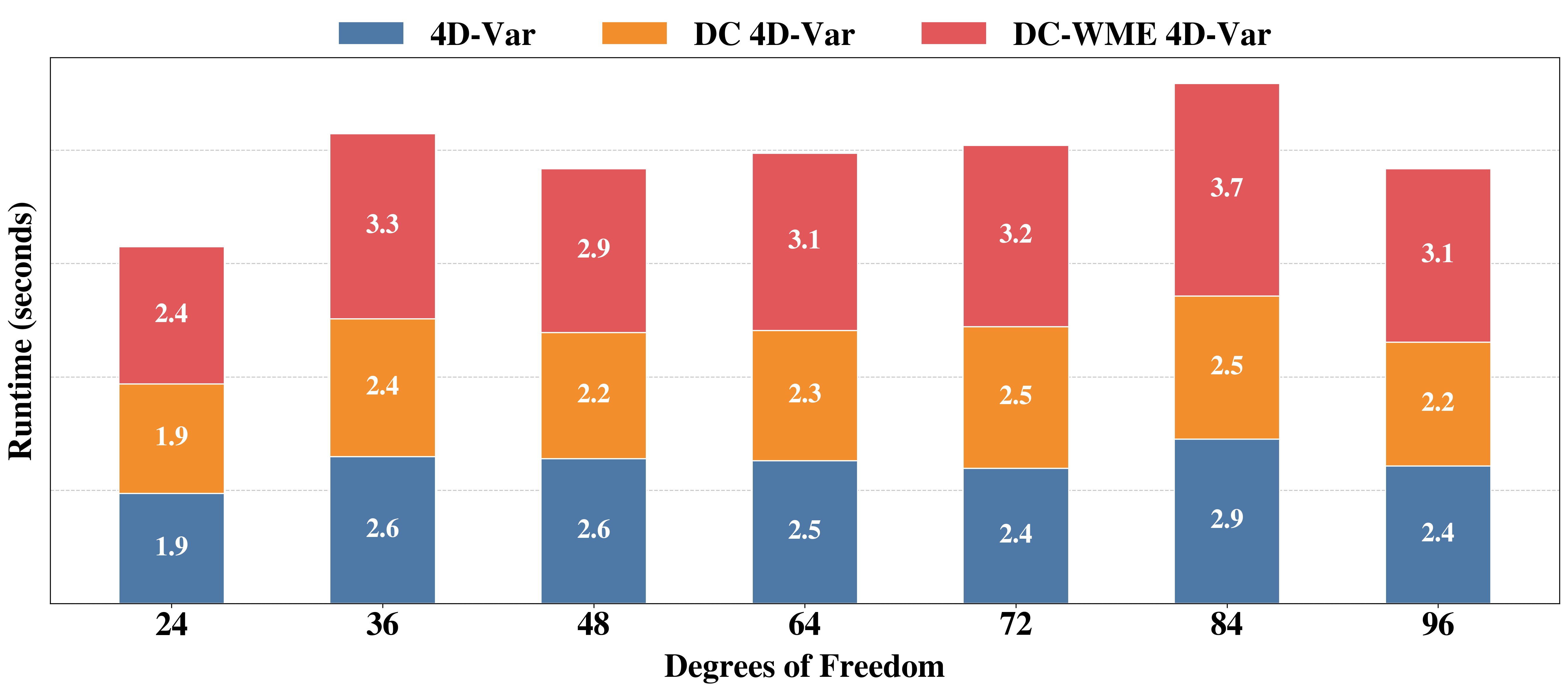}}
    \caption{Wall-clock runtime for 4D-Var, DC 4D-Var, and DC-WME 4D-Var applied to the Lorenz-96 system over a simulation of length $ t = 20000 $, evaluated across increasing DoF. 
    For low-dimensional cases (e.g., 24 DoF), DC-WME 4D-Var exhibits the highest runtime due to fixed overhead associated with computing the weighted mean error map. 
    As system size increases, the relative cost difference remains stable: at 96 DoF, DC-WME 4D-Var completes within 0.7 seconds of standard 4D-Var.
    This demonstrates that the added computational cost of DC-WME remains moderate and scales reasonably with system size, making it a viable option even for high-dimensional settings.}
    \label{fig:comp_cost_plot_l96}
\end{figure*}


\subsubsection{Computational Costs}

Recall that the analysis of Section~\ref{sec:comp_considerations}.\ref{subsec:comp_cost} suggests that all the 4D-Var methods should incur a similar computational cost. 
This is confirmed in Fig.~\ref{fig:comp_cost_plot_l96}, which shows the average computational cost, measured in wall-clock runtime over five data assimilation experiments, for each of the three 4D-Var algorithms applied to the Lorenz-96 system over a time horizon of $ t = 20{,}000 $, across a range of problem sizes characterized by the number of DoF. 
The data are assimilated every 10 time steps; therefore, each algorithm must execute 2,000 assimilation windows.

Among the three approaches, the baseline 4D-Var algorithm consistently exhibits the lower run times across all problem sizes as expected. The plot reveals several key trends.  First, the baseline 4D-Var algorithm (blue bars) exhibits modest scaling with increasing DoF. Runtime grows from 1.9 seconds at $K=24$ to 2.9 seconds at $K=84$, before slightly decreasing to 2.4 seconds at $K=96$. 
This relative flatness suggests that the core adjoint computations in traditional 4D-Var scale well and may benefit from memory locality or caching at higher DoF. The DC 4D-Var algorithm (orange bars) adds a small but consistent overhead to the baseline runtime across all DoF (approximately 2.2–2.5 seconds), indicating that the additional consistency terms are computationally lightweight and scale linearly with problem size. 

The DC-WME 4D-Var algorithm (red bars) incurs the highest cost, particularly as the DoF increases. 
This growth reflects the cost of evaluating the WME QoI and its gradient, which scales with both model complexity and window length. 
The peak observed at $K=84$ may be due to increased numerical instability or memory pressure during these computations. 
Despite this, the runtime gap between methods remains constant as the problem size grows. 
At 96 DoF, the overhead of DC-WME 4D-Var relative to standard 4D-Var is only about 0.7 seconds. 
This observation suggests that for larger-scale systems, the relative cost increase is marginal compared to the total optimization cost and appears to be a reasonable trade-off to achieve the superior accuracy and stability demonstrated in the other experiments. 

\section{Summary and Conclusions}

This work introduces and analyzes a Weighted Mean Error (WME) extension of a Data-Consistent 4D-Var framework to improve variational data assimilation in nonlinear, partially observed, and high-dimensional systems. 
By incorporating a predictability-aware weighting of the model–data misfit, the proposed DC-WME 4D-Var method integrates structural insights from data-consistent inversion theory into the variational assimilation framework. 
The implementation leverages automatic differentiation and quasi-Newton optimization within a differentiable programming environment, ensuring flexibility and scalability. 
Numerical experiments on the Lorenz-63 system confirm that inflation of the background variance is essential in chaotic regimes. 
Empirical estimates of a lower bound on $\sigma_b^2$ indicate that sufficient prior uncertainty is necessary for stable and consistent assimilation, particularly under weak observability.

These results support theoretical expectations and validate variance inflation as a safeguard against numerical instability and overconfident estimation.

In the shallow water equations (SWE) benchmark test, DC-WME 4D-Var demonstrates practical effectiveness in PDE-constrained settings. 
Across the benchmark test cases, DC-WME 4D-Var consistently reduces root mean squared error (RMSE) and forecast bias relative to both standard 4D-Var and unweighted DC-4DVar. 
The method maintains robust performance under increased observation noise and growing state dimensionality. 
Notably, it retains accuracy under short assimilation windows, where baseline methods often degrade. 
This robustness arises from the WME map’s ability to emphasize well-observed dynamical directions, resulting in more accurate state reconstructions. 

Despite moderate computational overhead, the DC-WME method scales well with system size. 
The cost of computing and differentiating the WME term is offset by improved forecast skill and numerical stability, making the method a practical choice for high-dimensional data assimilation problems. 
This study shows that integrating predictability-aware weighting into the 4D-Var framework substantially improves estimation accuracy, temporal consistency, and robustness. 
The DC-WME approach strengthens the link between statistical and variational formulations of inverse problems and provides a scalable foundation for modern data assimilation. 
Future work will extend the DC-WME 4D-Var methodology to large-scale storm surge forecasting. 
In these applications, the SWE govern coastal inundation dynamics, and the ability to incorporate spatially varying structure and uncertainty makes the method a promising candidate for real-time, high-fidelity storm surge prediction.

\section{CRediT authorship contribution statement}
\textbf{Rylan Spence:} Writing – original draft, Visualization, Validation, Software, Methodology, Investigation, Formal analysis, Conceptualization;
\textbf{Troy Butler:} Writing – original draft, review \& editing, Supervision, Project administration, Conceptualization, Funding acquisition. 
\textbf{Clint Dawson: }  Writing - review \& editing, Supervision, Conceptualization, Funding acquisition.
\section{Data Availability}
The code to recreate all of the examples in this work are located at \href{https://github.com/rspence821505/Variational-Data-Consistent-Assimilation}{Variational Data Consistent Assimilation Github}. The SWEMniCSx library \href{https://github.com/UT-CHG/SWEMniCS}{SWEMniCS Github} can serve as a practical guide for practitioners to learn and start applying shallow water equations to coastal engineering problems. 

\section{Declaration of competing Interest}
The authors declare that they have no known competing financial interests or personal relationships that could have appeared to
influence the work reported in this paper.
\section{Acknowledgments}

T.~Butler's work is supported by the National Science Foundation under Grant No.~DMS-2208460 and is also supported by NSF IR/D program while working at National Science Foundation. 
C.~Dawson’s and R.~Spence's work is supported in part by the National Science Foundation No. DMS-2208461. 
However, any opinion, finding, conclusions, or recommendations expressed in this material are those of the authors and do not necessarily reflect the views of the National Science Foundation. Finally, the authors would like to thank Dr. Benjamin Pachev and Dr. Mark Loveland for their invaluable insights in adapting the SWEMniCSx modeling software into a data assimilation framework.


\appendix

\section{Data-Consistent Inversion Summary}\label{app:DCI_summary}

We provide a high-level summary of the Data-Consistent Inversion (DCI) problem and its solution as covered in this work and refer the interested reader to~\cite{Butler2018} for more details.

\subsection{Consistency}\label{app:consistency}
Given a probability measure $\mathbb{P}_\mathcal{D}$ on $(\mathcal{D}, \mathcal{B}_\mathcal{D}, \mu_\mathcal{D})$ with density $\pi_\mathcal{D}$, we say a probability measure $\mathbb{P}_{Z}$ on $(Z, \mathcal{B}_{\mathcal{Z}}, \mu_Z)$, with density $\pi_Z$, is consistent with respect to the measurable map $Q:Z\to\mathcal{D}$ if for all $E\in\mathcal{B}_\mathcal{D}$, 
\begin{equation}
\begin{split}
    \mathbb{P}_Z(Q^{-1}(E)) 
    &= \int_{Q^{-1}(E)} \pi_Z(z) \, d\mu_Z \\
    &= \int_{E} \pi_{\mathcal{D}} \!\left(Q(\mathbf{z}_0)\right) d\mu_\mathcal{D} =\mathbb{P}_\mathcal{D}(E).   \label{eq:consistency}
\end{split}
\end{equation}
\subsection{Stochastic Inverse Problem}\label{subsec:sip}

Given an observed probability measure, $\mathbb{P}_{\mathrm{obs}}$ on $\mathcal{D}$, a solution to the stochastic inverse problem is a probability measure $\mathbb{P}_{\mathcal{Z}}$ on $\mathcal{Z}$ that is consistent in the sense of definition~\eqref{eq:consistency}. 
Assuming $\mathbb{P}_{\mathcal{Z}}$ and $\mathbb{P}_{\mathrm{obs}}$ admit probability densities, $\pi_{\mathcal{Z}}$ and $\pi_{\mathrm{obs}}$, this is equivalent to:
\begin{equation}
\int_{Q^{-1}(E)} \pi_{Z}(\mathbf{z}_0) \mu_{\mathcal{Z}}=\int_E \pi_{\mathrm{obs}}(y) \mu_{\mathcal{D}} .
\end{equation}
When the mapping $Q$ is injective and its Jacobian is clearly defined, the stochastic inverse problem yields a unique solution that, in theory, can be calculated through the standard change of variables formula:
\begin{equation}
\pi_{\mathrm{up}}(\mathbf{z}_{0})=\pi_{\mathrm{obs}}(Q(\mathbf{z}_{0}))\left|J_Q\right|
\end{equation}
where $\left|J_Q\right|$ is the determinant of the Jacobian of $Q(\mathbf{z}_{0})$. 
The main challenge in solving the stochastic inverse problem stems from cases where $Q^{-1}(y)$ is ill-defined, i.e., multiple $\mathbf{z}_0$ map to the same $y \in \mathcal{D}$. 
This can occur either from nonlinearity in $Q$ or differences in dimension between the spaces.
No matter the cause, it is often the case that direct application of the change of variables formula is not possible. 

\subsection{Density-Based Solution}\label{app:DCI_solution}

The work of \cite{Butler2018} demonstrated that by adopting a suitable initial probability density, $\pi_{\mathrm{init}}$, which satisfies a predictability assumption, it is possible to obtain a unique solution in the form of a multiplicative update to this density.
\begin{theorem}[Existence and Uniqueness]\label{thm:sip_exist_unique}
The probability measure $\mathbb{P}_{\mathrm{up}}$ on $\left(\mathcal{Z}, \mathcal{B}_{\mathcal{Z}}\right)$ defined by
\begin{equation}
\mathbb{P}_{\mathrm{up}}(A)=\int_{\mathcal{D}}\left(\int_{A \cap Q^{-1}(y)} \pi_{\mathrm{init}}(\mathbf{z}_{0}) \frac{\pi_{\mathrm{obs}}(Q(\mathbf{z}_{0}))}{\pi_{\mathrm{pred}}(Q(\mathbf{z}_{0}))} d \mu_{\mathcal{Z}, y}\right) d \mu_{\mathcal{D}}, \label{eq:sip_disintegrate}
\end{equation}
$ \forall A \in \mathcal{B}_{\mathcal{Z}}$ is a consistent solution to the stochastic inverse problem in the sense of~\eqref{eq:consistency} and is uniquely determined for a given prior density $\pi_{\mathrm{init}}$ on $\left(\mathcal{Z}, \mathcal{B}_{\mathcal{Z}}\right)$ where its push-forward defines a predicted density, $\pi_{\mathrm{pred}}$, satisfying the predictability assumption that $\exists$ $C>0$ such that $\pi_{\mathrm{pred}}(y)\leq C\pi_{\mathrm{obs}}(y)$ $\forall y\in\mathcal{D}$.
\end{theorem}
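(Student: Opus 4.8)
The plan is to recognize \eqref{eq:sip_disintegrate} as the disintegration of the multiplicative update measure $d\mathbb{P}_{\mathrm{up}} = r(\mathbf{z}_0)\, d\mathbb{P}_{\mathrm{init}}$, where $r(\mathbf{z}_0) = \pi_{\mathrm{obs}}(Q(\mathbf{z}_0))/\pi_{\mathrm{pred}}(Q(\mathbf{z}_0))$ and $\mathbb{P}_{\mathrm{init}}$ is the measure with density $\pi_{\mathrm{init}}$, and then to read off consistency and uniqueness directly from that disintegration. First I would invoke the disintegration theorem for the measurable map $Q:\mathcal{Z}\to\mathcal{D}$: under the standing regularity assumptions on the spaces there is an essentially unique family of conditional probability measures $\{\mu_{\mathcal{Z},y}\}_{y\in\mathcal{D}}$, with $\mu_{\mathcal{Z},y}$ concentrated on the fiber $Q^{-1}(y)$, such that $\int_{\mathcal{Z}} f\, d\mu_{\mathcal{Z}} = \int_{\mathcal{D}} \big( \int_{Q^{-1}(y)} f\, d\mu_{\mathcal{Z},y}\big)\, d\mu_{\mathcal{D}}$ for nonnegative measurable $f$. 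Applying this with $f = \pi_{\mathrm{init}}$ and using that $\pi_{\mathrm{pred}}$ is by definition the push-forward density of $\pi_{\mathrm{init}}$ through $Q$ yields the key identity
\begin{equation}
\int_{Q^{-1}(y)} \pi_{\mathrm{init}}(\mathbf{z}_0)\, d\mu_{\mathcal{Z},y} = \pi_{\mathrm{pred}}(y) \quad \text{for } \mu_{\mathcal{D}}\text{-a.e. } y,
\end{equation}
which is the engine of the whole argument.

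Second, to establish consistency it suffices to verify \eqref{eq:consistency} on sets of the form $A = Q^{-1}(E)$ for $E\in\mathcal{B}_{\mathcal{D}}$. For such $A$ the inner domain $A\cap Q^{-1}(y)$ equals $Q^{-1}(y)$ when $y\in E$ and is empty otherwise, so \eqref{eq:sip_disintegrate} collapses to $\mathbb{P}_{\mathrm{up}}(Q^{-1}(E)) = \int_E \big(\int_{Q^{-1}(y)} \pi_{\mathrm{init}}(\mathbf{z}_0)\, \tfrac{\pi_{\mathrm{obs}}(Q(\mathbf{z}_0))}{\pi_{\mathrm{pred}}(Q(\mathbf{z}_0))} \, d\mu_{\mathcal{Z},y}\big)\, d\mu_{\mathcal{D}}$. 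On the fiber $Q^{-1}(y)$ the ratio $\pi_{\mathrm{obs}}(Q(\mathbf{z}_0))/\pi_{\mathrm{pred}}(Q(\mathbf{z}_0)) = \pi_{\mathrm{obs}}(y)/\pi_{\mathrm{pred}}(y)$ is constant and factors out, and the key identity turns the remaining fiber integral into $\pi_{\mathrm{pred}}(y)$, leaving $\int_E \pi_{\mathrm{obs}}(y)\, d\mu_{\mathcal{D}} = \mathbb{P}_{\mathrm{obs}}(E)$. Taking $E = \mathcal{D}$ gives $\mathbb{P}_{\mathrm{up}}(\mathcal{Z}) = 1$; nonnegativity is immediate from nonnegativity of the densities, and countable additivity follows from monotone convergence applied to the inner and outer integrals, so $\mathbb{P}_{\mathrm{up}}$ is a genuine probability measure and a consistent solution. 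At this point I would make explicit where the predictability assumption is used: it guarantees that $\mathbb{P}_{\mathrm{obs}}$ is absolutely continuous with respect to $\mathbb{P}_{\mathrm{pred}}$, so that the integrand is $\mu_{\mathcal{D}}$-a.e. finite and the total mass computed above equals one rather than falling short — without it the construction need not yield a probability measure.

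Third, for uniqueness I would disintegrate an arbitrary consistent solution $\mathbb{P}_{\mathcal{Z}}$ relative to $Q$, obtaining conditionals $\{\mathbb{P}_{\mathcal{Z},y}\}$ on the fibers whose marginal (push-forward) is forced to be $\mathbb{P}_{\mathrm{obs}}$ by consistency. The phrase ``uniquely determined for a given prior density $\pi_{\mathrm{init}}$'' is interpreted as the requirement that the update inherit the prior's conditional structure on the generalized contour events, i.e. that $\mathbb{P}_{\mathcal{Z},y}$ coincide for $\mathbb{P}_{\mathrm{obs}}$-a.e. $y$ with the conditional $d\mathbb{P}_{\mathrm{init},y} = \pi_{\mathrm{init}}(\mathbf{z}_0)\,d\mu_{\mathcal{Z},y}/\pi_{\mathrm{pred}}(y)$ of the initial measure; with both the fiber conditionals and the marginal pinned down, essential uniqueness of the disintegration forces $\mathbb{P}_{\mathcal{Z}} = \mathbb{P}_{\mathrm{up}}$. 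The hard part will be technical rather than conceptual: making the disintegration rigorous (existence and essential uniqueness of $\{\mu_{\mathcal{Z},y}\}$ requires the ambient spaces to be sufficiently regular, e.g. Polish/Radon, with $Q$ measurable) and carefully verifying the compatibility of reference measures underlying the identity $\int_{Q^{-1}(y)}\pi_{\mathrm{init}}\,d\mu_{\mathcal{Z},y} = \pi_{\mathrm{pred}}(y)$, together with the null-set bookkeeping controlled by the predictability assumption. Everything downstream of that identity is a short computation.
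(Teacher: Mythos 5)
The paper does not actually prove this theorem itself: it is imported verbatim from \cite{Butler2018}, and your disintegration-based argument is essentially the proof given there — disintegrate $\mu_{\mathcal{Z}}$ over the fibers of $Q$, use $\int_{Q^{-1}(y)}\pi_{\mathrm{init}}\,d\mu_{\mathcal{Z},y}=\pi_{\mathrm{pred}}(y)$ to verify consistency on sets $Q^{-1}(E)$, and obtain uniqueness (for a fixed $\pi_{\mathrm{init}}$) from the essential uniqueness of the disintegration, i.e.\ from requiring the update to carry the initial measure's fiber conditionals. So your proposal is correct and follows the same route as the cited source. One small remark: you invoke predictability as $\mathbb{P}_{\mathrm{obs}}\ll\mathbb{P}_{\mathrm{pred}}$ (i.e.\ $\supp\pi_{\mathrm{obs}}\subseteq\supp\pi_{\mathrm{pred}}$), which is the intended and standard form and matches the paper's own appendix discussion, even though the inequality as literally printed in the theorem statement ($\pi_{\mathrm{pred}}\leq C\pi_{\mathrm{obs}}$) is stated in the reverse direction; your usage is the one that actually makes the total-mass and cancellation steps go through.
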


We generally work with the densities directly rather than the integral in~\eqref{eq:sip_disintegrate} as this is sufficient for generating samples. 
To this end, we formally define the solution to the stochastic inverse problem as an update to the initial density via the formula
\begin{equation}
    \pi_{\mathrm{up}} \left(\mathbf{z}_{0}\right) = \pi_{\mathrm{init}} \left(\mathbf{z}_{0}\right) \frac{\pi_{\mathrm{obs}}\left( Q\left(\mathbf{z}_{0}\right) \right) }{\pi_{\mathrm{pred}} \left( Q \left(\mathbf{z}_{0}\right) \right)}. \label{eq:dci_problem2}
\end{equation}

\subsection{Predictability Assumption}
The predictability assumption of Theorem~\ref{thm:sip_exist_unique} is actually a statement about the properties of both the initial density and the QoI map. 
This assumption implies $\supp(\pi_{\mathrm{obs}}) \subseteq \supp(\pi_{\mathrm{pred}})$, ensuring that observable values with positive likelihood are also predictable, which is fundamental to solvability. 
It is therefore critical that we are able to assess whether or not the predictability assumption is satisfied. 

We develop a numerical diagnostic to verify if the predictability assumption is met by first recognizing that if it is satisfied then $\pi_{\mathrm{up}}$ is in fact a density from which it immediately follows that:
\begin{equation}\label{eq:e_r}
    \mathbb{E}\left(r\left(Q \left(\mathbf{z}_{0}\right)\right)\right) = \int_{\mathcal{Z}} r\left(\mathbf{z}_{0}\right) \pi_{\mathrm{init}}\left(\mathbf{z}_{0}\right) \mu_{\mathcal{Z}} = \int_{\mathcal{Z}} \pi_{\mathrm{up}} \left(\mathbf{z}_{0} \right) \mu_{\mathcal{Z}} = 1,
\end{equation}
where
\begin{equation}\label{eq:r}
r(Q(\mathbf{z}_{0})):=\frac{\pi_{\mathrm{obs}}(Q(\mathbf{z}_{0}))}{\pi_{\mathrm{pred}}(Q(\mathbf{z}_{0}))}.
\end{equation}
From this, we see that any Monte Carlo based estimate of the sample mean of $r(Q(\mathbf{z}_0))$ should be approximately unity if the predictability assumption holds. 
This result is widely referenced as a numerical diagnostic and applied throughout much of the DCI literature (e.g., see~\cite{Butler2018, Pilosov2023, del-Castillo-Negrete2024}).
In particular, a significant departure of the sample mean of $\mathbb{E}(r)$ from unity beyond what one might expect from finite sample error indicates potential violations of the predictability assumption.

\subsection{MUD points}\label{app:MUD}

In the context of this work, we assume a finite amount of (possibly noisy) data on a QoI map is obtained for an unknown state $\mathbf{z}_{0}^{\dagger}$, and we estimate this state with the maximal updated density (MUD) point defined by
\begin{equation}
 z^{\mathrm{MUD}}:= \underset{\mathbf{z}_{0}\in Z}{\arg\max} \ \pi_{\mathrm{up}}(\mathbf{z}_{0}).
\end{equation}
We refer the interested reader to the work of \cite{Pilosov2023} that first considered the problem of point estimation within the DCI context using MUD points as well as provided the full linear Gaussian theory and a comparison to Bayesian MAP points and least-squares estimates.
The work of \cite{del-Castillo-Negrete2024} extended this further to sequential MUD point estimation that included change-point-detection for time-shifting parameters. 


\section{Proofs of DCI 4D-Var Results}
We now establish the existence and uniqueness of the minimizer of the DC 4D-Var function $ \mathcal{J}_{\mathrm{DC}} $, commonly referred to as the MUD point, and we note that the extension of these results to DC-WME 4D-Var function follow naturally through appropriate substitution of terms.
While prior work has proven related uniqueness results in parameter estimation (e.g.,~\cite{Pilosov2023}), we provide a complete and rigorous proof tailored to the state estimation setting. 
The argument proceeds by establishing four main components: strict convexity, coercivity, existence, and uniqueness.  
In the theorems below, we adopt the following notation and assumptions. 
First, we assume that $ \mathcal{J}_{\mathrm{DC}} $ denotes the data-consistent 4D-Var cost function as defined in~\eqref{eq:dc_4dvar_cost}. 
The control variable, denoted $ \mathbf{z}_0 $, belongs to the state space $ \mathcal{Z} $, which we assume to be a reflexive Banach space. 
Additionally, we assume the predictability assumption from Theorem~\ref{thm:sip_exist_unique} is satisfied. 
For all time indices $ k $, the norms $ \| \cdot \|_{\mathbf{B}^{-1}_k} $, $ \| \cdot \|_{\mathbf{R}^{-1}_k} $, and $ \| \cdot \|_{\mathbf{L}^{-1}_k} $ are induced by the symmetric positive-definite operators $ \mathbf{B}_k $, $ \mathbf{R}_k $, and $ \mathbf{L}_k $, respectively.

\begin{assumption}[Quantity of Interest Map]
We make the following assumptions for the QoI map used in the data-consistent 4D-Var function.
\begin{enumerate}
\item Each map $ Q_k $ is full rank. \label{ass:qoi_map_1}
\item Each map $ Q_k $ is Fréchet differentiable. \label{ass:qoi_map_2}
\item For each $k$, $ Q_k : \mathcal{Z} \to \mathcal{D}_k $ be a bounded linear operator into a Hilbert space $ \mathcal{D}_k $\label{ass:qoi_map_3}
\item $ Q_k : \mathcal{Z} \to \mathcal{D} $ are weak-to-strong continuous QoI maps, i.e. whenever a sequence $\mathbf{z}_n \rightharpoonup \mathbf{z}$ weakly in $\mathcal{Z}$, it follows that $Q_k\left(\mathbf{z}_n\right) \rightarrow Q_k(\mathbf{z})$ strongly in $\mathcal{D}$.\label{ass:qoi_map_4}
\item Each map $ Q_k $ satisfies a linear growth condition: there exists $ C_k > 0 $ such that
\begin{equation}\label{ass:qoi_map_5}
\| Q_k(\mathbf{z}_0) \|_{\mathcal{D}} \leq C_k (1 + \| \mathbf{z}_0 \|_{\mathcal{Z}}), \quad \forall \mathbf{z}_0 \in \mathcal{Z}.
\end{equation}
\end{enumerate}
\end{assumption}

\subsection{Convexity}\label{append:convexity}

We first show that $ \mathcal{J}_{\mathrm{DC}} $ is strictly convex on the reflexive Banach space $ \mathcal{Z} $. 
At a high-level, the result follows from the structure of the function, which comprises a sum of quadratic terms involving bounded linear QoI maps $ Q_k: \mathcal{Z} \rightarrow \mathcal{D}_k $, each assumed to be full rank. We combine this with the positive definiteness of the background, observation, and predictability precision operators, and invoke the predictability assumption to show that the resulting Hessian defines a strictly positive definite bilinear form

\begin{lemma}[Strict Convexity of $\mathcal{J}_{\mathrm{DC}}$]\label{lemma:convexity}
Under Assumptions~\ref{ass:qoi_map_1} and~\ref{ass:qoi_map_3} $\mathcal{J}_{\mathrm{DC}}$ is strictly convex on $\mathcal{Z}$
\end{lemma}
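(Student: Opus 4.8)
The plan is to show strict convexity of $\mathcal{J}_{\mathrm{DC}}$ by computing its second Gâteaux/Fréchet derivative and verifying that the associated quadratic form is strictly positive on $\mathcal{Z} \setminus \{0\}$. Since each $Q_k$ is a bounded linear operator (Assumption~\ref{ass:qoi_map_3}), the composition $\mathbf{z}_0 \mapsto Q_k \mathbf{z}_0$ is affine, so each of the three terms in~\eqref{eq:dc_4dvar_cost} is a quadratic functional of $\mathbf{z}_0$. Consequently $\mathcal{J}_{\mathrm{DC}}$ is itself quadratic plus affine plus constant, and its Hessian is the constant symmetric bilinear form
\begin{equation}
    \mathcal{H}(\mathbf{h},\mathbf{h}) = \langle \mathbf{h}, \mathbf{B}^{-1}\mathbf{h}\rangle + \sum_{k=0}^N \langle Q_k\mathbf{h}, \mathbf{R}_k^{-1} Q_k\mathbf{h}\rangle - \sum_{k=0}^N \langle Q_k\mathbf{h}, \mathbf{L}_k^{-1} Q_k\mathbf{h}\rangle,
\end{equation}
for all directions $\mathbf{h}\in\mathcal{Z}$. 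I would first carry out this differentiation carefully, noting that the cross terms and the $\mathbf{z}_0^b$-dependent terms vanish at second order.

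Next I would rewrite the $k$th summand using the ratio precision operator $\mathbf{W}_k^{-1} := \mathbf{R}_k^{-1} - \mathbf{L}_k^{-1}$, so that
\begin{equation}
    \mathcal{H}(\mathbf{h},\mathbf{h}) = \langle \mathbf{h}, \mathbf{B}^{-1}\mathbf{h}\rangle + \sum_{k=0}^N \langle Q_k\mathbf{h}, \mathbf{W}_k^{-1} Q_k\mathbf{h}\rangle.
\end{equation}
The key sub-claim is that the predictability assumption forces each $\mathbf{W}_k^{-1}$ to be positive semidefinite. Here I would invoke the Gaussian identification of the predicted/observed densities together with the conceptual content of the predictability assumption ($\pi_{\mathrm{pred}} \le C\,\pi_{\mathrm{obs}}$) discussed in Section~\ref{subsec:ratio_matrices}: in the linear-Gaussian setting this translates into $\mathbf{L}_k^{-1} \preceq \mathbf{R}_k^{-1}$ (equivalently $\sigma_{\min}(\mathbf{L}_k)\ge\sigma_{\max}(\mathbf{R}_k)$ gives the stronger SPD statement), so $\mathbf{W}_k^{-1}\succeq 0$. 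Then each term $\langle Q_k\mathbf{h}, \mathbf{W}_k^{-1} Q_k\mathbf{h}\rangle \ge 0$.

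Finally, strict positivity comes from the background term alone: since $\mathbf{B}$ is SPD, $\langle \mathbf{h}, \mathbf{B}^{-1}\mathbf{h}\rangle \ge c\|\mathbf{h}\|_{\mathcal{Z}}^2 > 0$ for $\mathbf{h}\neq 0$ (using that $\mathbf{B}^{-1}$ is a bounded, boundedly invertible SPD operator on the Hilbert structure; on a reflexive Banach space one works with the induced norm $\|\cdot\|_{\mathbf{B}^{-1}}$ which is equivalent to $\|\cdot\|_{\mathcal{Z}}$). Adding the nonnegative sum preserves strict positivity, so $\mathcal{H}(\mathbf{h},\mathbf{h}) > 0$ for all $\mathbf{h}\ne 0$, which for a quadratic functional is equivalent to strict convexity. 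The full-rank Assumption~\ref{ass:qoi_map_1} is not strictly needed for this weakest form of the conclusion — it becomes essential only if one wants strict positivity of the summed QoI terms independently — but I would remark that it guarantees the extra curvature that improves conditioning, matching the discussion in Section~\ref{subsec:existence_unique_mud}.

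The main obstacle I anticipate is the justification that the predictability assumption, stated abstractly in Theorem~\ref{thm:sip_exist_unique} as a density domination condition, indeed yields $\mathbf{W}_k^{-1}\succeq 0$ at the operator level rather than merely controlling tail behavior in a single direction. One must either restrict to the linear-Gaussian regime where the translation is exact, or adopt $\mathbf{W}_k^{-1}\succeq 0$ (equivalently the SPD-ness of $\mathbf{W}_k^{-1}$) as the working form of the predictability hypothesis in this variational setting, as the paper does in Section~\ref{subsec:ratio_matrices}. I would state this reduction explicitly at the start of the proof so the convexity argument itself reduces to the short linear-algebraic chain above.
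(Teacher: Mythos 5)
Your proposal is correct and follows essentially the same route as the paper's proof: compute the constant Hessian bilinear form of the quadratic functional, rewrite it using the ratio precision operators $\mathbf{W}_k^{-1}=\mathbf{R}_k^{-1}-\mathbf{L}_k^{-1}$, and deduce strict positivity from the SPD background term together with the (semi)definiteness of each $\mathbf{W}_k^{-1}$ furnished by the predictability assumption. Your remarks that the full-rank hypothesis on $Q_k$ is not actually needed once $\mathbf{B}^{-1}$ alone gives strict positivity (the paper retains it to make the QoI sum strictly positive as well), and that the density-domination form of the predictability assumption must be read, as the paper does, as the operator-level condition $\mathbf{W}_k^{-1}\succ 0$ in the linear-Gaussian setting, are accurate but minor refinements of the same argument.
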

\begin{proof}
The function $ \mathcal{J}_{\mathrm{DC}} $ consists of quadratic and affine terms in $ \mathbf{z}_0 $, which ensures twice Fréchet differentiability. At any point $ \mathbf{z}_0 \in \mathcal{Z} $, the Hessian of $ \mathcal{J}_{\mathrm{DC}} $ defines a symmetric bilinear form $ D^2 \mathcal{J}_{\mathrm{DC}}(\mathbf{z}_0) : \mathcal{Z} \times \mathcal{Z} \to \mathbb{R} $, given by
\begin{equation}
\begin{split}
D^2 \mathcal{J}_{\mathrm{DC}}(\mathbf{z}_0)\!\left(\mathbf{v}, \mathbf{w}\right) 
    &= \langle \mathbf{B}^{-1} \mathbf{v}, \mathbf{w} \rangle + \sum_{k=0}^N \langle \mathbf{R}_k^{-1} Q_k \mathbf{v}, \, Q_k \mathbf{w} \rangle \\
    &\quad - \sum_{k=0}^N \langle \mathbf{L}_k^{-1} Q_k \mathbf{v}, \, Q_k \mathbf{w} \rangle.
\end{split}
\end{equation}

Introduce the ratio precision operator:
\begin{equation}
\mathbf{W}_{k}^{-1} := \mathbf{R}_k^{-1} - \mathbf{L}_k^{-1}.
\end{equation}
Substituting into the expression for the Hessian yields
\begin{equation}
D^2 \mathcal{J}_{\mathrm{DC}}(\mathbf{z}_0)\left(\mathbf{v}, \mathbf{v}\right) =
\langle \mathbf{B}^{-1} \mathbf{v}, \mathbf{v} \rangle
+ \sum_{k=0}^N \langle \mathbf{W}_{k}^{-1} Q_k \mathbf{v}, Q_k \mathbf{v} \rangle.
\end{equation}
To establish strict convexity, it suffices to verify coercivity of the bilinear form. That is, a constant $ c > 0 $ must exist such that
\begin{equation}
D^2 \mathcal{J}_{\mathrm{DC}}(\mathbf{z}_0)\left(\mathbf{v}, \mathbf{v}\right) \geq c \| \mathbf{v} \|_{\mathcal{Z}}^2 \quad \text{for all } \mathbf{v} \in \mathcal{Z}.
\end{equation}
Since $ \mathbf{B}^{-1} $ is symmetric and positive definite, the Rayleigh quotient guarantees the bound
\begin{equation}
\langle \mathbf{B}^{-1} \mathbf{v}, \mathbf{v} \rangle \geq \lambda_{\min}\left(\mathbf{B}^{-1}\right) \| \mathbf{v} \|_{\mathcal{Z}}^2
\end{equation}
for some constant $ \lambda_{\min}\left(\mathbf{B}^{-1}\right) > 0 $. Next, observe that each term $ \langle \mathbf{W}_{k}^{-1} Q_k \mathbf{v}, Q_k \mathbf{v} \rangle $ is nonnegative and becomes strictly positive when $ Q_k \mathbf{v} \neq 0 $, due to the assumption $ \mathbf{W}_{k}^{-1} \succ 0 $ (i.e., the predictability assumption). Full-rank property of each operator $ Q_k $ implies that the condition $ Q_k \mathbf{v} = 0 $ for all $ k $ leads to $ \mathbf{v} = 0 $. Consequently, the sum contributes strictly positively whenever $ \mathbf{v} \neq 0 $, so
\begin{equation}
D^2 \mathcal{J}_{\mathrm{DC}}(\mathbf{z}_0)\left(\mathbf{v}, \mathbf{v}\right) > 0 \quad \text{for all } \mathbf{v} \in \mathcal{Z} \setminus \{0\}.
\end{equation}
This result confirms that the second derivative defines a strictly positive definite bilinear form. Therefore, $ \mathcal{J}_{\mathrm{DC}} $ is strictly convex.
\end{proof}

\subsection{Coercivity}

While we established coercivity of the Hessian of $\mathcal{J}_{\mathrm{DC}}$ in the prior proof, we next verify that $ \mathcal{J}_{\mathrm{DC}} $ is itself coercive, which allows us to apply the direct method in the calculus of variations. 
Specifically, we demonstrate that $ \mathcal{J}_{\mathrm{DC}}(\mathbf{z}_0) \to \infty $ as $ \| \mathbf{z}_0 \|_{\mathcal{Z}} \to \infty $. 
This behavior arises from the dominant quadratic growth of the background term and a linear growth condition on each $ Q_k $, which ensures that the negative predictability penalty remains subdominant. 
A coercive function confines any minimizing sequence $ \{ \mathbf{z}_m \} \subset \mathcal{Z} $ to a sequentially compact subset of $ \mathcal{Z} $, a consequence of reflexivity. 

\begin{lemma}[Coercivity of the DC 4D-Var function]\label{lemma:coercivity}
Under Assumption~\ref{ass:qoi_map_5} $\mathcal{J}_{\mathrm{DC}}$ is coercive on $\mathcal{Z}$;that is,
\begin{equation}
\| \mathbf{z}_0 \|_{\mathcal{Z}} \to \infty \quad \Rightarrow \quad \mathcal{J}_{\mathrm{DC}}(\mathbf{z}_0) \to \infty.
\end{equation}
\end{lemma}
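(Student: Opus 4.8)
\emph{Proof proposal.} The plan is to prove the stronger quantitative statement that there exist constants $c>0$ and $C\in\mathbb{R}$, independent of $\mathbf{z}_0$, with $\mathcal{J}_{\mathrm{DC}}(\mathbf{z}_0)\geq c\,\|\mathbf{z}_0\|_{\mathcal{Z}}^2-C$, from which coercivity is immediate. To this end I would decompose $\mathcal{J}_{\mathrm{DC}}$ into the background term and, for each $k$, the pair consisting of the data-misfit term $\tfrac12\|\mathbf{y}_k-Q_k(\mathbf{z}_0)\|_{\mathbf{R}_k^{-1}}^2$ together with the subtracted predictability term $-\tfrac12\|Q_k(\mathbf{z}_0)-Q_k(\mathbf{z}_0^{b})\|_{\mathbf{L}_k^{-1}}^2$, and bound these pieces separately. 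The only piece that supplies growth is the background term: since $\mathbf{B}^{-1}$ is symmetric positive definite, the Rayleigh quotient gives $\tfrac12\|\mathbf{z}_0-\mathbf{z}_0^{b}\|_{\mathbf{B}^{-1}}^2\geq\tfrac12\lambda_{\min}(\mathbf{B}^{-1})\|\mathbf{z}_0-\mathbf{z}_0^{b}\|_{\mathcal{Z}}^2$, and the standard inequality $\|\mathbf{z}_0-\mathbf{z}_0^{b}\|_{\mathcal{Z}}^2\geq\tfrac12\|\mathbf{z}_0\|_{\mathcal{Z}}^2-\|\mathbf{z}_0^{b}\|_{\mathcal{Z}}^2$ converts this into $\tfrac14\lambda_{\min}(\mathbf{B}^{-1})\|\mathbf{z}_0\|_{\mathcal{Z}}^2$ minus a fixed constant.

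The crux is to show that, for each $k$, the paired quantity $g_k\big(Q_k(\mathbf{z}_0)\big):=\tfrac12\|\mathbf{y}_k-Q_k(\mathbf{z}_0)\|_{\mathbf{R}_k^{-1}}^2-\tfrac12\|Q_k(\mathbf{z}_0)-Q_k(\mathbf{z}_0^{b})\|_{\mathbf{L}_k^{-1}}^2$ is bounded below by a finite constant $m_k$ independent of $\mathbf{z}_0$. The key observation is that, writing $u:=Q_k(\mathbf{z}_0)\in\mathcal{D}_k$, the map $u\mapsto g_k(u)$ is a quadratic whose second-order part is governed by the ratio precision operator $\mathbf{W}_k^{-1}=\mathbf{R}_k^{-1}-\mathbf{L}_k^{-1}$, which is positive definite by the predictability assumption. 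Completing the square then yields $g_k(u)=\tfrac12\langle\mathbf{W}_k^{-1}(u-u_k^{\ast}),\,u-u_k^{\ast}\rangle+m_k$ for a fixed shift $u_k^{\ast}\in\mathcal{D}_k$ and a fixed scalar $m_k$; the first summand is nonnegative, so $g_k(u)\geq m_k$ for every $u$, hence for $u=Q_k(\mathbf{z}_0)$ regardless of $\mathbf{z}_0$. Finiteness of $m_k$ uses that $\mathbf{W}_k^{-1}$ is boundedly invertible and that $Q_k(\mathbf{z}_0^{b})$ is a fixed finite element, the latter being part of the standing setup; the linear growth condition of Assumption~\ref{ass:qoi_map_5} is what guarantees that $u=Q_k(\mathbf{z}_0)$ is everywhere well defined and at most linearly large, ensuring no term of $\mathcal{J}_{\mathrm{DC}}$ can fail to be controlled by the background growth. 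Summing over $k$ and adding the background bound gives $\mathcal{J}_{\mathrm{DC}}(\mathbf{z}_0)\geq\tfrac14\lambda_{\min}(\mathbf{B}^{-1})\|\mathbf{z}_0\|_{\mathcal{Z}}^2-C$, and letting $\|\mathbf{z}_0\|_{\mathcal{Z}}\to\infty$ finishes the argument.

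I expect the main obstacle to be precisely the handling of the subtracted predictability terms. A naive estimate replaces $-\tfrac12\|Q_k(\mathbf{z}_0)-Q_k(\mathbf{z}_0^{b})\|_{\mathbf{L}_k^{-1}}^2$ by $-\tfrac12\lambda_{\max}(\mathbf{L}_k^{-1})\|Q_k(\mathbf{z}_0)-Q_k(\mathbf{z}_0^{b})\|_{\mathcal{D}_k}^2$, which is itself negative and, under linear growth, grows quadratically in $\|\mathbf{z}_0\|_{\mathcal{Z}}$; in general this could cancel the coercive growth of the background term, so one cannot bound the predictability contribution in isolation. The resolution — and the reason the predictability assumption is essential here and not merely a convenience — is to keep each predictability term paired with its corresponding data-misfit term, so that the net second-order behavior in the observable direction is dictated by the positive-definite operator $\mathbf{W}_k^{-1}$ rather than by the indefinite difference of the two quadratic forms, reducing the entire paired contribution to a harmless additive constant.
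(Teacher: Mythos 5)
Your proof is correct, but it takes a genuinely different route from the paper's. The paper estimates the three groups of terms in isolation: the background term is bounded below by $\tfrac{1}{2}\lambda_{\min}(\mathbf{B}^{-1})\|\mathbf{z}_0\|_{\mathcal{Z}}^2-C$ via the Rayleigh quotient, the data-misfit terms are simply dropped as nonnegative, and the subtracted predictability term is bounded above by $C^{*}(1+\|\mathbf{z}_0\|_{\mathcal{Z}}^2)$ using the linear growth condition of Assumption~\ref{ass:qoi_map_5}; this yields $\mathcal{J}_{\mathrm{DC}}(\mathbf{z}_0)\geq\bigl(\tfrac{1}{2}\lambda_{\min}(\mathbf{B}^{-1})-C^{*}\bigr)\|\mathbf{z}_0\|_{\mathcal{Z}}^2-C^{*}$, which is coercive only under the additional size condition $\tfrac{1}{2}\lambda_{\min}(\mathbf{B}^{-1})>C^{*}$, a caveat the paper's own proof acknowledges. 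You instead keep each data-misfit term paired with its predictability counterpart, complete the square in $u=Q_k(\mathbf{z}_0)$, and invoke positive definiteness of $\mathbf{W}_k^{-1}=\mathbf{R}_k^{-1}-\mathbf{L}_k^{-1}$ (the standing predictability assumption, used in exactly the same way as in the convexity lemma) to conclude that each paired contribution is bounded below by a constant independent of $\mathbf{z}_0$, so the background term alone supplies unconditional quadratic growth; your remark about the danger of estimating the negative term in isolation is precisely the weakness of the paper's decomposition. What each approach buys: the paper's argument uses only the hypothesis literally cited in the lemma (linear growth) and never needs $\mathbf{W}_k^{-1}\succ 0$, but pays with the hidden quantitative condition relating $C^{*}$ to $\lambda_{\min}(\mathbf{B}^{-1})$; your pairing argument removes that condition entirely (and in fact barely uses Assumption~\ref{ass:qoi_map_5} at all, and works for nonlinear $Q_k$), at the price of requiring $\mathbf{W}_k^{-1}$ to be boundedly invertible so the completed-square minimum $m_k$ is finite --- automatic for finite-dimensional observation spaces, and a caveat you correctly flag.
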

\begin{proof}
Define $ \mathbf{v} := \mathbf{z}_0 - \mathbf{z}_0^{\mathrm{b}} \in \mathcal{Z} $. Boundedness and coercivity of $ \mathbf{B}^{-1} $ imply the existence of a constant $ \lambda_{\min}\left(\mathbf{B}^{-1}\right) > 0 $ such that
\begin{equation}
\| \mathbf{v} \|_{\mathbf{B}}^2 \geq \lambda_{\min}\left(\mathbf{B}^{-1}\right) \| \mathbf{v} \|_{\mathcal{Z}}^2.
\end{equation}
The background term admits the lower bound
\begin{equation}
\frac{1}{2} \| \mathbf{z}_0 - \mathbf{z}_0^{\mathrm{b}} \|_{\mathbf{B}}^2 \geq \frac{\lambda_{\min}\left(\mathbf{B}^{-1}\right)}{2} \| \mathbf{z}_0 \|_{\mathcal{Z}}^2 - C,
\end{equation}
where the constant $ C > 0 $ depends only on $ \left\| \mathbf{z}_0^{\mathrm{b}} \right\| $. This inequality follows from the reverse triangle inequality. The data misfit term satisfies the nonnegativity property, since each $ Q_k(\mathbf{z}_0) \in \mathcal{D} $:
\begin{equation}
\frac{1}{2} \sum_{k=0}^N \| \mathbf{y}_k - Q_k(\mathbf{z}_0) \|_{\mathbf{R}^{-1}_k}^2 \geq 0.
\end{equation}
The predictability penalty term admits the upper bound obtained from the linear growth assumption:
\begin{equation}
\left\|Q_k\left(\mathbf{z}_0\right)-Q_k\left(\mathbf{z}_0^b\right)\right\|_{\mathcal{D}} \leq C_k\left(2+\left\|\mathbf{z}_0\right\|_{\mathcal{Z}}+\left\|\mathbf{z}_0^b\right\|_{\mathcal{Z}}\right).
\end{equation}
A quadratic polynomial bound in $ \left\| \mathbf{z}_0 \right\|_{\mathcal{Z}}^2 $ then follows by squaring both sides:
\begin{equation}
\left\|Q_k\left(\mathbf{z}_0\right)-Q_k\left(\mathbf{z}_0^b\right)\right\|_{\mathcal{D}}^2 \leq C_k^{\prime}\left(1+\left\|\mathbf{z}_0\right\|_{\mathcal{Z}}^2\right),     \label{eq:lin_growth2}
\end{equation}
where the constant $ C_k^{\prime} > 0 $ absorbs the cross terms and the fixed background norm $ \left\| \mathbf{z}_0^b \right\| $.
Applying~\eqref{eq:lin_growth2} to the weighted predictability norm yields
\begin{equation}
\left\| Q_k(\mathbf{z}_0) - Q_k(\mathbf{z}_0^{\mathrm{b}}) \right\|_{\mathbf{L}^{-1}_k}^2 \leq \lambda_{\max }\left(\mathbf{L}_k^{-1}\right)C^{\prime}_k(1 + \| \mathbf{z}_0 \|_{\mathcal{Z}}^2).
\end{equation}
The entire third term is then bounded above as
\begin{equation}
\left| \frac{1}{2} \sum_{k=0}^N \| Q_k(\mathbf{z}_0) - Q_k(\mathbf{z}_0^{\mathrm{b}}) \|_{\mathbf{L}^{-1}_k}^2 \right| \leq C^{*} (1 + \| \mathbf{z}_0 \|_{\mathcal{Z}}^2),
\end{equation}
for some constant $ C^{*} > 0 $. Combining all contributions gives
\begin{equation}
\begin{split}
\mathcal{J}_{\mathrm{DC}}(\mathbf{z}_0) 
    &\geq \frac{\lambda_{\min}\!\left(\mathbf{B}^{-1}\right)}{2} 
        \| \mathbf{z}_0 \|_{\mathcal{Z}}^2 
        - C^{*}\!\left(1 + \| \mathbf{z}_0 \|_{\mathcal{Z}}^2\right) \\
    &= \left( \frac{\lambda_{\min}\!\left(\mathbf{B}^{-1}\right)}{2} - C^{*} \right) 
        \| \mathbf{z}_0 \|_{\mathcal{Z}}^2 - C^{*}.
\end{split}
\end{equation}
Since $ \lambda_{\min}\left(\mathbf{B}^{-1}\right) > 0 $ and $ C^{*} $ is fixed, the right-hand side grows without bound as $ \| \mathbf{z}_0 \|_{\mathcal{Z}} \to \infty $, provided $ \frac{\lambda_{\min}\left(\mathbf{B}^{-1}\right)}{2} > C^{*} $ or under asymptotic growth. Therefore, coercivity of $ \mathcal{J}_{\mathrm{DC}} $ follows:
\begin{equation}
\lim_{\| \mathbf{z}_0 \|_{\mathcal{Z}} \to \infty} \mathcal{J}_{\mathrm{DC}}(\mathbf{z}_0) = \infty.
\end{equation}
\end{proof}
\subsection{Existence}

Having established coercivity, we now prove the existence of a minimizer. 
Reflexivity ensures that every bounded minimizing sequence admits a weakly convergent subsequence.
If $ \mathcal{J}_{\mathrm{DC}} $ is weakly lower semicontinuous, then it attains its infimum at the weak limit of such a sequence.
We confirm this lower semicontinuity by appealing to the weak-to-strong continuity of the maps $ Q_k $, which guarantees that the norm-squared terms in the observation and predictability penalties vary continuously under weak convergence. 

\begin{theorem}[Existence of Minimizer for DC 4D-Var]
Under Assumptions~\ref{ass:qoi_map_4}, \ref{ass:qoi_map_5}, and Lemma~\ref{lemma:coercivity}, $\mathcal{J}_{\mathrm{DC}}$ is weakly lower semicontinuous and coercive on $ \mathcal{Z} $, and thus admits a minimizer.  \label{thm:existance}
\end{theorem}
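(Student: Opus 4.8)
The plan is to invoke the direct method in the calculus of variations, which requires exactly two ingredients, both essentially in hand: coercivity of $\mathcal{J}_{\mathrm{DC}}$ (supplied by Lemma~\ref{lemma:coercivity}) and weak lower semicontinuity of $\mathcal{J}_{\mathrm{DC}}$ on the reflexive Banach space $\mathcal{Z}$. So the only substantive work is to establish the latter. Take a minimizing sequence $\{\mathbf{z}_m\}\subset\mathcal{Z}$, i.e. $\mathcal{J}_{\mathrm{DC}}(\mathbf{z}_m)\to\inf_{\mathcal{Z}}\mathcal{J}_{\mathrm{DC}}$. Coercivity forces $\{\mathbf{z}_m\}$ to be bounded in $\mathcal{Z}$; otherwise a subsequence would have $\|\mathbf{z}_m\|_{\mathcal{Z}}\to\infty$, hence $\mathcal{J}_{\mathrm{DC}}(\mathbf{z}_m)\to\infty$, contradicting that it is a minimizing sequence (the infimum is finite, e.g. bounded above by $\mathcal{J}_{\mathrm{DC}}(\mathbf{z}_0^{\mathrm{b}})<\infty$). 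By reflexivity of $\mathcal{Z}$, a bounded sequence has a weakly convergent subsequence; relabel so that $\mathbf{z}_m\rightharpoonup\mathbf{z}^\star$ in $\mathcal{Z}$ for some $\mathbf{z}^\star\in\mathcal{Z}$.

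Next I would show $\mathcal{J}_{\mathrm{DC}}(\mathbf{z}^\star)\le\liminf_{m\to\infty}\mathcal{J}_{\mathrm{DC}}(\mathbf{z}_m)$ term by term. The background term $\mathbf{z}_0\mapsto\tfrac12\|\mathbf{z}_0-\mathbf{z}_0^{\mathrm{b}}\|_{\mathbf{B}^{-1}}^2$ is continuous and convex (the associated bilinear form is bounded and coercive by positive-definiteness of $\mathbf{B}^{-1}$), hence weakly lower semicontinuous by the standard fact that a convex, strongly lower semicontinuous functional is weakly lower semicontinuous. For the observation and predictability terms I would use Assumption~\ref{ass:qoi_map_4}: since $\mathbf{z}_m\rightharpoonup\mathbf{z}^\star$ weakly, weak-to-strong continuity gives $Q_k(\mathbf{z}_m)\to Q_k(\mathbf{z}^\star)$ strongly in $\mathcal{D}_k$ for each $k=0,\dots,N$. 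Strong convergence in $\mathcal{D}_k$ together with continuity of $v\mapsto\|\mathbf{y}_k-v\|_{\mathbf{R}_k^{-1}}^2$ and of $v\mapsto\|v-Q_k(\mathbf{z}_0^{\mathrm{b}})\|_{\mathbf{L}_k^{-1}}^2$ yields that \emph{both} the data-misfit terms and the (subtracted) predictability terms converge — not merely $\liminf$, but full limits. Summing the finitely many indices $k$ and adding the weakly lower semicontinuous background term gives
\begin{equation}
\mathcal{J}_{\mathrm{DC}}(\mathbf{z}^\star)\le\liminf_{m\to\infty}\mathcal{J}_{\mathrm{DC}}(\mathbf{z}_m)=\inf_{\mathcal{Z}}\mathcal{J}_{\mathrm{DC}},
\end{equation}
so $\mathbf{z}^\star$ attains the infimum and is a minimizer.

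The main obstacle — and the reason the weak-to-strong continuity assumption is doing real work here — is the sign of the predictability term. Because it enters $\mathcal{J}_{\mathrm{DC}}$ with a \emph{minus} sign, the usual one-line argument ``norm-squared composed with a bounded linear map is convex hence weakly lower semicontinuous'' fails for that term: $-\|Q_k\mathbf{v}\|_{\mathbf{L}_k^{-1}}^2$ is concave, and concave functionals are weakly \emph{upper} semicontinuous, the wrong direction. This is exactly why one cannot get away with mere weak continuity of $Q_k$ and must upgrade to Assumption~\ref{ass:qoi_map_4}, which converts weak convergence of inputs into strong convergence of outputs and thereby makes every $Q_k$-dependent term genuinely continuous (so its sign is irrelevant). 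A secondary point worth noting in passing: coercivity of $\mathcal{J}_{\mathrm{DC}}$ itself already requires the predictability/linear-growth structure (Lemma~\ref{lemma:coercivity}) to keep the negative term subdominant to the quadratic background term, so the existence argument rests on both assumptions jointly rather than on weak lower semicontinuity alone. Finally, combining this existence result with the strict convexity established in Lemma~\ref{lemma:convexity} immediately upgrades ``a minimizer'' to ``the unique minimizer,'' recovering the MUD point.
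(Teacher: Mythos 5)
Your proposal is correct, but it reaches weak lower semicontinuity by a genuinely different route than the paper. You run the direct method in the standard way (coercivity from Lemma~\ref{lemma:coercivity} bounds a minimizing sequence, reflexivity extracts a weak limit) and then verify lower semicontinuity \emph{term by term}: the background term is convex and strongly continuous, hence weakly lsc, while Assumption~\ref{ass:qoi_map_4} upgrades weak convergence of $\mathbf{z}_m$ to strong convergence of $Q_k(\mathbf{z}_m)$, so the data-misfit and the negatively signed predictability terms converge outright and their sign is irrelevant. The paper instead first notes strong continuity of $\mathcal{J}_{\mathrm{DC}}$ and then obtains weak lower semicontinuity from \emph{convexity of the whole functional} (Lemma~\ref{lemma:convexity}) via the Banach--Saks theorem and Jensen's inequality applied to Ces\`aro means of a weakly convergent subsequence; it does not invoke Assumption~\ref{ass:qoi_map_4} directly at this step. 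The trade-off: your argument uses exactly the hypotheses listed in the theorem, does not need convexity of $\mathcal{J}_{\mathrm{DC}}$ (and hence not the predictability assumption $\mathbf{W}_k^{-1}\succ 0$ or full rank of $Q_k$) for existence, and correctly identifies weak-to-strong continuity as the device that neutralizes the concave predictability term — a point the paper handles implicitly through global convexity; the paper's route, in exchange, keeps the existence proof structurally aligned with the strict-convexity machinery it reuses for uniqueness. Your closing remark that Lemma~\ref{lemma:convexity} then upgrades the minimizer to the unique MUD point matches the paper's Theorem~\ref{thm:uniqueness}.
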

\begin{proof}
Consider a sequence $ \{ \mathbf{z}_m \} \subset \mathcal{Z} $ that converges strongly to some $ \mathbf{z} \in \mathcal{Z} $. Continuity of each map $ Q_k $, along with continuity of norms in Hilbert spaces, allows the limit to pass through each term:
\begin{equation}
\lim_{m \to \infty} \mathcal{J}_{\mathrm{DC}}(\mathbf{z}_m) = \mathcal{J}_{\mathrm{DC}}(\mathbf{z}).
\end{equation}
This identity establishes strong continuity of $ \mathcal{J}_{\mathrm{DC}} $, which implies weak lower semicontinuity:
\begin{equation}
\mathcal{J}_{\mathrm{DC}}(\mathbf{z}) \leq \liminf_{m \to \infty} \mathcal{J}_{\mathrm{DC}}(\mathbf{z}_m).
\end{equation}
Now assume that a sequence $ \{ \mathbf{z}_k \} \subset \mathcal{Z} $ converges weakly to some $ \mathbf{z}_0 \in \mathcal{Z} $. 
By definition of the $\liminf$, there exists a subsequence $ \{ \mathbf{z}_{k_j} \} $ such that
\begin{equation}
\lim_{k \to \infty} \mathcal{J}_{\mathrm{DC}}(\mathbf{z}_k) = \lim_{j \to \infty} \mathcal{J}_{\mathrm{DC}}(\mathbf{z}_{k_j}).
\end{equation}
Reflexivity of $ \mathcal{Z} $ permits application of the Banach–Saks Theorem~\cite{Brezis2011}, which yields a further subsequence $ \{ \mathbf{z}_i \} $ whose Cesàro mean
\begin{equation}
\tilde{\mathbf{z}}_n := \frac{1}{n} \sum_{i=1}^n \mathbf{z}_i
\end{equation}
converges strongly to $ \mathbf{z}_0 $ in $ \mathcal{Z} $. 
Since $ \mathcal{J}_{\mathrm{DC}} $ is convex by Lemma~\ref{lemma:convexity}, Jensen's inequality implies
\begin{equation}
\frac{1}{n} \sum_{i=1}^n \mathcal{J}_{\mathrm{DC}}(\mathbf{z}_i) \geq \mathcal{J}_{\mathrm{DC}}(\tilde{\mathbf{z}}_n),
\end{equation}
and strong convergence $ \tilde{\mathbf{z}}_n \to \mathbf{z}_0 $ ensures
\begin{equation}
\lim_{n \to \infty} \mathcal{J}_{\mathrm{DC}}(\tilde{\mathbf{z}}_n) \geq \mathcal{J}_{\mathrm{DC}}(\mathbf{z}_0).
\end{equation}
It follows that
\begin{equation}
\lim_{k \to \infty} \mathcal{J}_{\mathrm{DC}}(\mathbf{z}_k) = \lim_{n \to \infty} \frac{1}{n} \sum_{i=1}^n \mathcal{J}_{\mathrm{DC}}(\mathbf{z}_i) \geq \mathcal{J}_{\mathrm{DC}}(\mathbf{z}_0),
\end{equation}
verifying that $ \mathcal{J}_{\mathrm{DC}} $ is weakly lower semicontinuous. Coercivity and weak lower semicontinuity of $ \mathcal{J}_{\mathrm{DC}} $, together with reflexivity of $ \mathcal{Z} $, satisfy the conditions of the direct method in the calculus of variations. Existence of a minimizer therefore follows. That is, there exists $ \mathbf{z}^* \in \mathcal{Z} $ such that
\begin{equation}
\mathcal{J}_{\mathrm{DC}}(\mathbf{z}^*) = \inf_{\mathbf{z}_0 \in \mathcal{Z}} \mathcal{J}_{\mathrm{DC}}(\mathbf{z}_0).
\end{equation}
\end{proof}
\noindent In the Data Consistent Inversion literature we refer to the mimimizer $\mathbf{z}^*$ as the Maximal Updated Density point $\mathbf{z}^{\mathrm{MUD}}$.
\subsection{Uniqueness}

Finally, the strict convexity of $ \mathcal{J}_{\mathrm{DC}} $ ensures uniqueness. 
If two distinct minimizers existed, they would contradict the strict convexity inequality, which only permits a single minimizer.
We therefore conclude that $ \mathcal{J}_{\mathrm{DC}} $ admits a unique global minimizer $ \mathbf{z}^{\mathrm{MUD}} \in \mathcal{Z} $, and that this minimizer satisfies the first-order optimality condition.

\begin{theorem}[Uniqueness of the DC 4D-Var Minimizer]
Under Assumption~\ref{ass:qoi_map_2} and Lemma~\ref{lemma:convexity} $\mathcal{J}_{\mathrm{DC}}$  admits a unique minimizer $ \mathbf{z}^{\text{MUD}} \in \mathcal{Z} $.    \label{thm:uniqueness}
\end{theorem}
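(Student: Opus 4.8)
The plan is to obtain uniqueness as an essentially immediate consequence of two facts already in hand: the existence of a minimizer (Theorem~\ref{thm:existance}) and the strict convexity of $\mathcal{J}_{\mathrm{DC}}$ (Lemma~\ref{lemma:convexity}). Strict convexity of a functional on a vector space precludes two distinct global minimizers, so the argument reduces to a short contradiction; the only additional content is to record the first-order optimality condition, which is available because $\mathcal{J}_{\mathrm{DC}}$ is Fréchet differentiable by Assumption~\ref{ass:qoi_map_2}.

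Concretely, I would proceed in three steps. First, invoke Theorem~\ref{thm:existance} so that the infimum $m := \inf_{\mathbf{z}_0 \in \mathcal{Z}} \mathcal{J}_{\mathrm{DC}}(\mathbf{z}_0)$ is attained and the set of minimizers is nonempty. Second, suppose toward a contradiction that $\mathbf{z}_1^* \neq \mathbf{z}_2^*$ both attain $m$; since $\mathcal{Z}$ is a vector space the midpoint $\tfrac{1}{2}(\mathbf{z}_1^* + \mathbf{z}_2^*)$ lies in $\mathcal{Z}$, and because the bounded linear QoI maps make $\mathcal{J}_{\mathrm{DC}}$ finite on all of $\mathcal{Z}$, strict convexity (Lemma~\ref{lemma:convexity}) yields $\mathcal{J}_{\mathrm{DC}}\!\left(\tfrac{1}{2}(\mathbf{z}_1^* + \mathbf{z}_2^*)\right) < \tfrac{1}{2}\mathcal{J}_{\mathrm{DC}}(\mathbf{z}_1^*) + \tfrac{1}{2}\mathcal{J}_{\mathrm{DC}}(\mathbf{z}_2^*) = m$, contradicting the definition of $m$. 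Hence the minimizer is unique, and we denote it $\mathbf{z}^{\mathrm{MUD}}$. Third, since $\mathcal{J}_{\mathrm{DC}}$ is Fréchet differentiable on the open set $\mathcal{Z}$, the interior minimizer is a critical point, so $D\mathcal{J}_{\mathrm{DC}}(\mathbf{z}^{\mathrm{MUD}}) = 0$; strict convexity conversely guarantees that any solution of this stationarity equation is the global minimizer, which closes the loop.

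I do not expect a genuine obstacle here, since the analytic work has already been carried out in Lemma~\ref{lemma:convexity} and Theorem~\ref{thm:existance}. The one point worth stating carefully is the passage from the Hessian statement proved in Lemma~\ref{lemma:convexity} (that $D^2\mathcal{J}_{\mathrm{DC}}(\mathbf{z}_0)(\mathbf{v},\mathbf{v}) > 0$ for every $\mathbf{v} \neq 0$) to the chord inequality used in the contradiction; this is the standard implication that a strictly positive-definite Hessian forces strict convexity, obtained by integrating the second derivative along the segment $[\mathbf{z}_1^*, \mathbf{z}_2^*]$, and it is especially transparent under the hypotheses of Lemma~\ref{lemma:convexity}, where the $Q_k$ are bounded linear maps so that the Hessian bilinear form is independent of the base point. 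For cleanliness I would also assume the full set of standing hypotheses in the theorem statement, so that both Theorem~\ref{thm:existance} and Lemma~\ref{lemma:convexity} apply simultaneously.
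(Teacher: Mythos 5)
Your proposal is correct and follows essentially the same route as the paper's proof: existence from Theorem~\ref{thm:existance}, a midpoint contradiction using strict convexity from Lemma~\ref{lemma:convexity}, and the first-order optimality condition $D\mathcal{J}_{\mathrm{DC}}(\mathbf{z}^{\mathrm{MUD}})=0$ via Fréchet differentiability. Your added remark on passing from the positive-definite Hessian to the strict chord inequality is a standard filling-in of a step the paper treats as already contained in Lemma~\ref{lemma:convexity}, not a different argument.
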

\begin{proof}
Existence of a minimizer $ \mathbf{z}^{\text{MUD}} \in \mathcal{Z} $ ensures that
\begin{equation}
\mathcal{J}_{\mathrm{DC}}(\mathbf{z}^{\text{MUD}}) = \inf_{\mathbf{z}_0 \in \mathcal{Z}} \mathcal{J}_{\mathrm{DC}}(\mathbf{z}_0).
\end{equation}
Strict convexity of $ \mathcal{J}_{\mathrm{DC}} $ permits application of the strict convexity inequality: for all $ \bar{\mathbf{z}}, \mathbf{z} \in \mathcal{Z} $ with $ \bar{\mathbf{z}} \neq \mathbf{z} $,
\begin{equation}
\mathcal{J}_{\mathrm{DC}}(\bar{\mathbf{z}}) > \mathcal{J}_{\mathrm{DC}}(\mathbf{z}) + D\mathcal{J}_{\mathrm{DC}}(\mathbf{z})\left(\bar{\mathbf{z}} - \mathbf{z}\right),
\end{equation}
where $ D\mathcal{J}_{\mathrm{DC}}(\mathbf{z}) \in \mathcal{Z}^* $ denotes the Fréchet derivative of $ \mathcal{J}_{\mathrm{DC}} $ at $ \mathbf{z} $.
The condition $ D\mathcal{J}_{\mathrm{DC}}(\mathbf{z}^{\text{MUD}}) = 0 $ implies that $ \mathbf{z}^{\text{MUD}} $ satisfies a first-order optimality condition and defines a strict local minimizer.
Uniqueness follows by contradiction. Suppose two distinct minimizers $ \mathbf{z}_1 \neq \mathbf{z}_2 $ exist. Then strict convexity implies
\begin{equation}
\mathcal{J}_{\mathrm{DC}} \left( \frac{1}{2}(\mathbf{z}_1 + \mathbf{z}_2) \right) < \frac{1}{2} \mathcal{J}_{\mathrm{DC}}(\mathbf{z}_1) + \frac{1}{2} \mathcal{J}_{\mathrm{DC}}(\mathbf{z}_2).
\end{equation}
This inequality contradicts the assumption that both $ \mathbf{z}_1 $ and $ \mathbf{z}_2 $ minimize $ \mathcal{J}_{\mathrm{DC}} $, since their average would attain a strictly lower value. Therefore, strict convexity guarantees that $ \mathcal{J}_{\mathrm{DC}} $ admits a unique minimizer $ \mathbf{z}^{\text{MUD}} \in \mathcal{Z} $, which satisfies the first-order condition
\begin{equation}
D\mathcal{J}_{\mathrm{DC}}(\mathbf{z}^{\text{MUD}}) = 0.
\end{equation}
\end{proof}


\newpage
\section{Adjoint DC-4DVar Algorithms}\label{append:adjoint_algos}
\begin{figure}[htbp]
\centering
\begin{minipage}[t]{0.48\textwidth}
\begin{algorithm}[H]
\caption{DC 4D-Var}
\begin{algorithmic}[1]
\State \textbf{Input:} $ \mathbf{z}_0^{\mathrm{b}}, \{\mathbf{y}_k\}_{k=0}^N , Q_k,\mathcal{H}_k, \mathcal{M}_k, \mathbf{B}, \mathbf{L}_k, \mathbf{R}_{k}$
\State \textbf{Output:} Gradient $ \nabla \mathcal{J}_{\mathrm{DC}}(\mathbf{z}_0) $
\Statex
\Statex \textbf{\large{\underline{Forwards Pass:}}}
\Statex
\Statex \textbf{Forward Model Propagation:}
\State $\mathbf{z}_0 \gets$ initial control guess
\For{$k = 0$ to $N-1$}
    \State $\mathbf{z}_{k+1} \gets \mathcal{M}_k(\mathbf{z}_k)$
\EndFor
\Statex
\Statex \textbf{\large{\underline{Backwards Pass:}}}
\Statex
\Statex \textbf{Compute Terminal Adjoint:}
\State $ \mathbf{r}_{N} \gets Q_N - \mathbf{y}_N$
\State $ \mathbf{q}_{N} \gets Q_N - Q_N^{\mathrm{b}}$
\State $\boldsymbol{\lambda}_N \gets (\nabla Q_N)^\top \left[\mathbf{R}_N^{-1} \mathbf{r}_N - \mathbf{L}_N^{-1} \mathbf{q}_{N}\right]$
\Statex
\Statex \textbf{Adjoint Recursion:}
\For{$k = N-1$ to $0$}
    \State $ \mathbf{r}_{k} \gets Q_k - \mathbf{y}_k$
    \State $ \mathbf{q}_{k} \gets Q_k - Q_k^{\mathrm{b}}$
    \State $\boldsymbol{\lambda}_k \gets (\nabla Q_k)^\top \left[\mathbf{R}_k^{-1} \mathbf{r}_{k} - \mathbf{L}_k^{-1} \mathbf{q}_{k}\right] + (\nabla \mathcal{M}_k)^\top \boldsymbol{\lambda}_{k+1}$
\EndFor
\Statex
\Statex \textbf{Gradient Evaluation:}
\State $\nabla \mathcal{J}_{\mathrm{DC}}(\mathbf{z}_0) \gets \mathbf{B}_0^{-1} (\mathbf{z}_0 - \mathbf{z}_0^{\mathrm{b}}) - \boldsymbol{\lambda}_0$
\end{algorithmic}
\end{algorithm}
\end{minipage}%
\hfill
\begin{minipage}[t]{0.48\textwidth}
\begin{algorithm}[H]
\caption{DC-WME 4D-Var}
\begin{algorithmic}[1]
\State \textbf{Input:} $ \mathbf{z}_0^{\mathrm{b}}, \{\mathbf{y}_k\}_{k=0}^N , \mathcal{H}_k, \mathcal{M}_k, \mathbf{B}, \mathbf{L}_k, \mathbf{R}_{k}$
\State \textbf{Output:} Gradient $ \nabla \mathcal{J}_{\mathrm{DC}}(\mathbf{z}_0) $
\Statex
\Statex \textbf{\large{\underline{Forwards Pass:}}}
\Statex
\Statex \textbf{Forward Model Propagation:}
\State $\mathbf{z}_0 \gets$ initial control guess
\For{$k = 0$ to $N-1$}
    \State $\mathbf{z}_{k+1} \gets \mathcal{M}_k(\mathbf{z}_k)$
\EndFor
\Statex
\Statex \textbf{\large{\underline{Backwards Pass:}}}
\Statex
\Statex \textbf{Compute WME:}
\State $Q_{\mathrm{wme}}(\mathbf{z}_0) \gets \frac{1}{\sqrt{N}} \sum\limits_{k=1}^N \mathbf{R}_{k}^{-1/2} \left(\mathcal{H}_k \mathbf{z}_k - \mathbf{y}_k\right)$ 
\State $\mathbf{q}_{\mathrm{wme}} = Q_{\mathrm{wme}}(\mathbf{z}_0) - Q_{\mathrm{wme}}(\mathbf{z}_0^{\mathrm{b}})$
\State $\mathbf{s} \gets Q_{\mathrm{wme}}\left(\mathbf{z}_0\right) - \mathbf{L}_{\mathrm{wme}}^{-1}\mathbf{q}_{\mathrm{wme}}$
\Statex
\Statex \textbf{Compute Terminal Adjoint:}
\State $\mathbf{J}_N \gets \frac{1}{\sqrt{N}}  \mathbf{R}_{N}^{-\frac{1}{2}}\mathcal{H}_N$
\State $\boldsymbol{\lambda}_{N} \gets \mathbf{J}_N^\top \mathbf{s}$
\Statex
\Statex \textbf{Adjoint Recursion:}
\For{$k = N-1$ to $0$}
    \State $\mathbf{J}_k \gets \frac{1}{\sqrt{N}}  \mathbf{R}_{k}^{-\frac{1}{2}}\mathcal{H}_k$
    \State $\nabla_{\mathbf{z}_k} \mathcal{J}_k \gets \mathbf{J}_k^\top \mathbf{s}$
    \State $\boldsymbol{\lambda}_k \gets D_{\mathbf{z}_k} \mathcal{M}_k^\top \boldsymbol{\lambda}_{k+1} + \nabla_{\mathbf{z}_k} \mathcal{J}_k$
\EndFor
\Statex
\Statex \textbf{Gradient Evaluation:}
\State $\nabla \mathcal{J}_{\mathrm{DC}}(\mathbf{z}_0) \gets \mathbf{B}_0^{-1} (\mathbf{z}_0 - \mathbf{z}_0^{\mathrm{b}}) + \boldsymbol{\lambda}_0$
\end{algorithmic}
\end{algorithm}
\end{minipage}
\end{figure}

 \bibliographystyle{elsarticle-num} 
 \bibliography{references}

\begin{thebibliography}{10}
\expandafter\ifx\csname url\endcsname\relax
  \def\url#1{\texttt{#1}}\fi
\expandafter\ifx\csname urlprefix\endcsname\relax\def\urlprefix{URL }\fi
\expandafter\ifx\csname href\endcsname\relax
  \def\href#1#2{#2} \def\path#1{#1}\fi

\bibitem{Sasaki1970}
Y.~Sasaki, {Some Basic Formalisms in Numerical Variational Analysis}, Mon.
  Weather Rev. 98~(12) (1970) 875--883.

\bibitem{Dimet1986}
F.-X. L.~E. Dimet, O.~Talagrand, Variational algorithms for analysis and
  assimilation of meteorological observations: theoretical aspects, Tellus A
  38A~(2) (1986) 97--110.

\bibitem{Lewis1985}
J.~M. Lewis, J.~C. Derber, The use of adjoint equations to solve a variational
  adjustment problem with advective constraints, Tellus A 37A~(4) (1985)
  309--322.

\bibitem{Courtier1987}
P.~Courtier, O.~Talagrand, Variational assimilation of meteorological
  observations with the adjoint vorticity equation. {II}: Numerical results, Q.
  J. R. Meteorol. Soc. 113~(478) (1987) 1329--1347.

\bibitem{Courtier1994}
P.~Courtier, J.~N. Thepaut, A.~Hollingsworth, A strategy for operational
  implementation of {4D}-var, using an incremental approach, Q. J. R. Meteorol.
  Soc. 120~(519) (1994) 1367--1387.

\bibitem{Gustafsson2012}
N.~Gustafsson, X.-Y. Huang, X.~Yang, K.~Mogensen, M.~Lindskog, O.~Vignes,
  T.~Wilhelmsson, S.~Thorsteinsson, Four-dimensional variational data
  assimilation for a limited area model, Tellus A 64~(1) (2012) 14985.

\bibitem{Rabier2000}
F.~Rabier, H.~Jarvinen, E.~Klinker, J.~F. Mahfouf, A.~Simmons, The {ECMWF}
  operational implementation of four-dimensional variational assimilation. {I}:
  Experimental results with simplified physics, Q. J. R. Meteorol. Soc.
  126~(564) (2000) 1143--1170.

\bibitem{Butler2012}
T.~Butler, D.~Estep, J.~Sandelin, {A measure-theoretic computational method for
  inverse sensitivity problems II: a posterior error analysis.}, SIAM J. Numer.
  Anal. 50~(1) (2012) 22--45.

\bibitem{Breidt2011}
J.~Breidt, T.~Butler, D.~Estep, {A measure-theoretic computational method for
  inverse sensitivity problems I: Method and analysis}, SIAM J. Numer. Anal.
  49~(5) (2011) 1836--1859.

\bibitem{Butler2014}
T.~Butler, D.~Estep, S.~Tavener, C.~Dawson, J.~J. Westerink, A
  measure-theoretic computational method for inverse sensitivity problems
  {III}: Multiple quantities of interest, SIAM/ASA J. Uncertain. Quantif. 2~(1)
  (2014) 174--202.

\bibitem{Butler2013}
T.~Butler, D.~Estep, A numerical method for solving a stochastic inverse
  problem for parameters, Ann. Nucl. Energy 52 (2013) 86--94.

\bibitem{Bingham2024}
D.~Bingham, T.~Butler, D.~Estep, Inverse problems for physics-based process
  models, Annual Review of Statistics and Its Application (2024) 461--482.

\bibitem{Butler2018}
T.~Butler, J.~Jakeman, T.~Wildey, Combining push-forward measures and bayes'
  rule to construct consistent solutions to stochastic inverse problems, SIAM
  J. Sci. Comput. 40~(2) (2018) A984--A1011.

\bibitem{Butler2020}
T.~Butler, J.~D. Jakeman, T.~Wildey, Optimal experimental design for prediction
  based on push-forward probability measures, J. Comput. Phys. 416 (2020)
  109518.

\bibitem{Tran2021}
A.~Tran, T.~Wildey, Solving stochastic inverse problems for
  {Property–Structure} linkages using data-consistent inversion and machine
  learning, JOM 73~(1) (2021) 72--89.

\bibitem{Pilosov2023}
M.~Pilosov, C.~del Castillo-Negrete, T.~Y. Yen, T.~Butler, C.~Dawson, Parameter
  estimation with maximal updated densities, Comput. Methods Appl. Mech. Eng.
  407 (2023) 115906.

\bibitem{del-Castillo-Negrete2024}
C.~del Castillo-Negrete, R.~Spence, T.~Butler, C.~Dawson, Sequential maximal
  updated density parameter estimation for dynamical systems with parameter
  drift, Int. J. Numer. Methods Eng. (2024).

\bibitem{Graham2015}
T.~Butler, L.~Graham, D.~Estep, C.~Dawson, J.~Westerink, Definition and
  solution of a stochastic inverse problem for the manning’s n parameter
  field in hydrodynamic models, Advances in Water Resources 78 (2015) 60--79.

\bibitem{Fleury2025}
A.~Fleury, F.~Bouttier, T.~Bergot, Calibration of parameter perturbations for
  ensemble prediction using data-consistent inversion, Mon. Weather Rev.
  153~(4) (2025) 655--672.

\bibitem{Talagrand1987}
O.~Talagrand, P.~Courtier, Variational assimilation of meteorological
  observations with the adjoint vorticity equation. {I}: Theory, Quart. J. Roy.
  Meteor. Soc. 113~(478) (1987) 1311--1328.

\bibitem{Liu1989}
D.~C. Liu, J.~Nocedal, On the limited memory bfgs method for large scale
  optimization, Math. Program. 45~(1-3) (1989) 503--528.

\bibitem{Nash1991}
S.~G. Nash, J.~Nocedal, numerical study limited memory bfgs method
  truncated-newton method large scale optimization, SIAM Journal Optimization
  1~(3) (1991) 358--372.

\bibitem{Zou1993}
X.~Zou, I.~M. Navon, M.~Berger, K.~H. Phua, T.~Schlick, F.~X. Le~Dimet,
  Numerical experience with limited-memory quasi-newton and truncated newton
  methods, SIAM J. Optim. 3~(3) (1993) 582--608.

\bibitem{jax2018github}
J.~Bradbury, R.~Frostig, P.~Hawkins, M.~J. Johnson, C.~Leary, D.~Maclaurin,
  G.~Necula, A.~Paszke, J.~Vander{P}las, S.~Wanderman-{M}ilne, Q.~Zhang,
  \href{http://github.com/jax-ml/jax}{Jax: composable transformations of
  python+numpy programs} (2018).
\newline\urlprefix\url{http://github.com/jax-ml/jax}

\bibitem{Dawson2024}
C.~Dawson, M.~Loveland, B.~Pachev, J.~Proft, E.~Valseth, {SWEMniCS}: a software
  toolbox for modeling coastal ocean circulation, storm surges, inland, and
  compound flooding, npj Nat. Hazards 1~(1) (2024).

\bibitem{Scroggs2022ty}
M.~W. Scroggs, J.~S. Dokken, C.~N. Richardson, G.~N. Wells, Construction of
  arbitrary order finite element degree-of-freedom maps on polygonal and
  polyhedral cell meshes, ACM Trans. Math. Softw. 48~(2) (2022) 1--23.

\bibitem{Baratta2023}
I.~A. Baratta, J.~P. Dean, J.~S. Dokken, M.~Habera, J.~Hale, C.~N. Richardson,
  M.~E. Rognes, M.~W. Scroggs, N.~Sime, G.~N. Wells, Dolfinx: the next
  generation fenics problem solving environment (2023).

\bibitem{Alnaes2014}
M.~S. Alnaes, A.~Logg, K.~B. Ølgaard, M.~E. Rognes, G.~N. Wells, Unified form
  language, ACM Trans. Math. Softw. 40~(2) (2014) 1--37.

\bibitem{Scroggs2022}
M.~W. Scroggs, I.~A. Baratta, C.~N. Richardson, G.~N. Wells, Basix: a runtime
  finite element basis evaluation library, J. Open Source Softw. 7~(73) (2022)
  3982.

\bibitem{Navon1987}
I.~M. Navon, D.~M. Legler, Conjugate-gradient methods for large-scale
  minimization in meteorology, Mon. Weather Rev. 115~(8) (1987) 1479--1502.

\bibitem{Li1993}
Y.~Li, I.~M. Navon, P.~Courtier, P.~Gauthier, Variational data assimilation
  semi-lagrangian semi-implicit global shallow-water equation model its
  adjoint, Monthly Weather Review 121~(6) (1993) 1759--1769.

\bibitem{Mahfouf2000}
J.~F. Mahfouf, F.~Rabier, The {ECMWF} operational implementation
  four-dimensional variational assimilation. {II}: experimental results
  improved physics, Quarterly Journal Royal Meteorological Society 126~(564)
  (2000) 1171--1190.

\bibitem{Janiskova1999}
M.~Janiskov\'{a}, J.~N. Th\'{e}paut, J.~F. Geleyn, Simplified regular physical
  parameterizations incremental four-dimensional variational assimilation,
  Monthly Weather Review 127~(1) (1999) 26--45.

\bibitem{Anderson1999}
J.~L. Anderson, S.~L. Anderson, A monte carlo implementation of the nonlinear
  filtering problem to produce ensemble assimilations and forecasts, Mon.
  Weather Rev. 127~(12) (1999) 2741--2758.

\bibitem{Anderson2007}
J.~L. Anderson, An adaptive covariance inflation error correction algorithm for
  ensemble filters, Tellus A 59~(2) (2007) 210--224.

\bibitem{Anderson2009}
J.~L. Anderson, Spatially and temporally varying adaptive covariance inflation
  for ensemble filters, Tellus A 61~(1) (2009) 72--83.

\bibitem{Bocquet2012}
M.~Bocquet, P.~Sakov, Combining inflation-free and iterative ensemble kalman
  filters for strongly nonlinear systems, Nonlinear Process. Geophys. 19~(3)
  (2012) 383--399.

\bibitem{Miyoshi2011}
T.~Miyoshi, The gaussian approach to adaptive covariance inflation and its
  implementation with the local ensemble transform kalman filter, Mon. Weather
  Rev. 139~(5) (2011) 1519--1535.

\bibitem{Luo2013}
X.~Luo, I.~Hoteit, Covariance inflation in the ensemble kalman filter: A
  residual nudging perspective and some implications, Mon. Weather Rev.
  141~(10) (2013) 3360--3368.

\bibitem{Balzano1998}
A.~Balzano, Evaluation of methods for numerical simulation of wetting and
  drying in shallow water flow models, Coast. Eng. 34~(1-2) (1998) 83--107.

\bibitem{Karna2011}
T.~Kärnä, B.~de~Brye, O.~Gourgue, J.~Lambrechts, R.~Comblen, V.~Legat,
  E.~Deleersnijder, A fully implicit wetting–drying method for {DG}-{FEM}
  shallow water models, with an application to the scheldt estuary, Comput.
  Methods Appl. Mech. Eng. 200~(5-8) (2011) 509--524.

\bibitem{Lorenz1963}
E.~N. Lorenz, Deterministic nonperiodic flow, J. Atmos. Sci. 20~(2) (1963)
  130--141.

\bibitem{Lorenz1996}
E.~Lorenz, Predictability: a problem partly solved, Ph.D. thesis, Shinfield
  Park, Reading (1995 1995).

\bibitem{Brezis2011}
H.~Brezis, Functional Analysis, Sobolev Spaces and Partial Differential
  Equations, Springer New York, New York, NY, 2011.

\end{thebibliography}

\end{document}